\newcommand{\la}{\lambda}
\newcommand{\Om}{\Omega}
\newcommand{\R}{\mathbb R}
\newcommand{\rd}{\color{red}}
\newtheorem{theorem}{Theorem}[section]
\newtheorem{corollary}[theorem]{Corollary}
\newtheorem{definition}[theorem]{Definition}
\newtheorem{lemma}[theorem]{Lemma}
\newtheorem{proposition}[theorem]{Proposition}
\newtheorem{remark}[theorem]{Remark}
\title{Pleijel's theorem for Schrödinger operators}
\author{Philippe Charron}
\address{Philippe Charron\newline Mathematics Department
	Technion – Israel Institute of Technology, 	Haifa 32000,	Israel\newline
	Department of Mathematics, University College London, Gower Street, LONDON, WC1E  6BT, UK}
\email{p.charron@ucl.ac.uk}
\author{Corentin Léna}
\address{Corentin Léna\newline Institut de Mathématiques, Bâtiment UniMail,
Rue Emile-Argand 11, 2000 Neuchâtel, Switzerland\newline
Universit\`a degli Studi di Padova, Dipartimento di Tecnica e Gestione dei Sistemi Industriali (DTG), Stradella S. Nicola 3, 36100 Vicenza, Italy\newline
Universit\`a degli Studi di Padova,  Dipartimento di Matematica ``Tullio Levi-Civita", via Trieste 63, 35121 Padova, Italy.}  \email{corentin.lena@unipd.it}
\date{January 24, 2025}
\begin{document}

\begin{abstract} {We are concerned in this paper with the real eigenfunctions of Schrödinger operators. We prove an asymptotic upper bound for the number of their nodal domains, which implies in particular that the inequality stated in Courant's theorem is strict, except for finitely many eigenvalues. Results of this type originated in 1956 with Pleijel's Theorem on the Dirichlet Laplacian and were obtained for some classes of Schrödinger operators by the first author, alone and in collaboration with B. Helffer and T. Hoffmann-Ostenhof. Using methods in part inspired by work of the second author on Neumann and Robin Laplacians, we greatly extend the scope of these previous results. }
\end{abstract}

\maketitle

Contribution statement: The authors contributed to the works equally.

\newpage

\section{Introduction}

\subsection{Setting}

 The aim of this paper is to give a generalization of Pleijel's theorem for the Dirichlet Laplacian to a large class of Schrödinger operators $H_V = -\Delta + V$ in $\mathbb{R}^d$. We want to give upper bounds on the number of nodal domains of an eigenfunction $f_\la$ of $H_V$. A nodal domain of a function $f: \Om \to \mathbb{R}$ is defined as a connected component of $\Om \backslash f^{-1}(0)$. Throughout this paper, the number of nodal domains of $f$ will be denoted by $\mu(f)$.

\subsection{Past results}

Over the years, numerous upper bounds have been found for the number of nodal domains of Laplace and Schrödinger eigenfunctions. 

\bigskip

Given $\Om \subset \mathbb{R}^d$  open, bounded and connected, let $\{\la_n\}$, $n \geq1$, be the eigenvalues of the Dirichlet Laplacian on $\Om$, numbered in  increasing order and with account of multiplicity, and let $ \{f_n\}$ be a basis of real-valued eigenfunctions associated to $\{\la_n\}$. 

\bigskip

The first upper bound on $\mu(f_n)$ was given by Courant in 1923 \cite{Cou1923}: for any $n \geq 1$ and any eigenfunction $f_n$ associated with $\la_n$, $\mu(f_n) \leq n$. Using a different method, Pleijel showed in 1956 \cite{Ple56} that the equality in Courant's theorem can only be attained finitely many times. Furthermore, he gave the following stronger asymptotic result:
\begin{equation}
	\label{eqPleijel}
	\limsup\limits_{n\to \infty} \frac{\mu(f_n)}{n} \leq \gamma_d \, ,
\end{equation}
where $\gamma_d$ is a constant that depends only on the dimension of $\Om$. Precisely, $\gamma_d$ is given by the following formula:
\begin{equation}
	\gamma_d = \frac{2^{2d-2} d^2 \Gamma(\frac{d}{2})}{j_{\frac d2 - 1}^d} \, .
\end{equation}
Here, $j_a$ denotes the first positive zero of the Bessel function of the first kind $J_a$. In particular, $\gamma_d < 1$ for all $d \geq 2$ { (see \cite{BerMey1982,HelPer2016})}.

\bigskip

Pleijel's result was generalized to closed manifolds and manifolds with boundary and Dirichlet boundary conditions in 1957 \cite{Pee1957} and in 1982 \cite{BerMey1982}. More recently, Pleijel's upper bound was proven for domains $\Om \subset \mathbb{R}^2$ with piecewise-analytic boundary and Neumann boundary conditions in 2009 \cite{Pol2009} and then for domains in $\mathbb{R}^d$ with $C^{1,1}$ boundary with either Neumann or some types of Robin boundary conditions  in 2019 \cite{Len2019}.

\bigskip

In the case of Schrödinger operators, Pleijel's asymptotic upper bound was found to hold for the quantum harmonic oscillator in 2018 \cite{Cha2018} as well as for Schrödinger operators with radial potentials in \cite{ChaHelHof2018}, which included both positive and negative potentials, possibly having a singularity at the origin.

\bigskip
\subsection{Aim of this paper}

Our goal is to prove estimate \eqref{eqPleijel} for Schrödinger operators with non-radial potentials. The methods used in \cite{Cha2018} and \cite{ChaHelHof2018} were highly dependant on the potential being radial to estimate the number of nodal domains. However, these methods are quite rigid and are not easily transfered to the non-radial case. In this paper, we introduce new tools that allow us to treat either radial or non-radial potentials. Like in \cite{ChaHelHof2018}, we consider both positive and negative potentials.

\bigskip

For example, one classical example that was not covered by previous results is a linear combination (with positive coefficients) of Coulomb potentials, which correspond to an electron in the electric field generated by a finite number of nuclei in free space. Our estimates also hold for perturbations of such potentials.

\bigskip 

\subsection{Main results}

 In this paper, we consider operators of the form $H_V = -\Delta + V$ in $\mathbb{R}^d$, with $V$ a real-valued function. These operators preserve real-valued functions, and accordingly we will only consider real-valued eigenfunctions in the rest of the paper. We will work with two different cases which will be treated differently.
 
 \bigskip
\begin{definition}\label{casea}
	Case A: $V\in C^1(\mathbb{R}^d)$ and there exist $C_1,C_2,C_3 >0$ and $a,b,c >0$, with $a\le b$, such that 
	
	\begin{align}
		C_1 |x|^a \leq V(x) \leq C_2 |x|^b \, , \\
		|\nabla V(x)| \leq C_3|x|^c \, .
	\end{align}
	
\end{definition}

Note that we will write throughout \emph{$f(x)\sim g(x)$ as $x\to a$} when
	$\lim_{x\to a}f(x)/g(x)\to 1$, where $f$ and $g$ are functions defined in a neighborhood of $a$, with $g>0$. {\ Also,} we will write $f(x) \asymp g(x)$ if there exists $ C\ge 1$ such that $ C^{-1}\le f(x)/g(x)\le C$ in a neighborhood of $a$.

\begin{definition}\label{caseb}
	Case B: there exist $N$ points $x_1, x_2 \ldots x_N\in\mathbb{R}^d$ (called \emph{ singular points}), with $N\ge0$,  such that $V\in C^1(\mathbb{R}^d\setminus\{x_1\dots,x_N\})$ and  $ V(x) \sim - c_{i}|x-x_i|^{ -a_i}$ when $x\to x_i$, with $0 < a_i <2$ and $c_i > 0$ for $1 \leq i \leq N$. Also, there exist  $C_1,C_2,C_3 >0$, $0 < a \leq b < 2$ and $c>0$ such that
	 
	\begin{align}
		-C_1 |x|^{-a} \leq V(x) \leq -C_2 |x|^{-b} \, , \\
		|\nabla V(x)| \leq C_3 |x|^{-c} \, ,
	\end{align}
	
\noindent when $|x|> M$, for a large enough $M$.
\end{definition}

\bigskip

\begin{remark}	\label{remc}
	It is easy to see that the exponents in definition \ref{caseb} must satisfy $c\leq b+1$, in particular $c<3$.
\end{remark}

\begin{remark}
	We have not given the relevant sufficient conditions on the exponents $a$, $b$ and $c$ yet. They will be specified in theorems \ref{thma} and \ref{thmb}.
\end{remark}

\begin{remark}
	We point out that case A holds for the sum of  finitely many harmonic oscillator potentials $(a=b=2$, $c=1)$ and case B for atomic Hamiltonians (that is to say when $V$ is the sum of finitely many Coulomb potentials, $a=b=1$, $c=2$).
\end{remark}

\bigskip

In both cases, the quadratic form associated to the differential operator $-\Delta+V$, acting on $C^\infty_c(\R^d\setminus X)$ {(where $X=\emptyset$ in case A and $X=\{x_1,\dots,x_N\}$ in case B)}, is lower-semi bounded, and we can therefore define the self-adjoint operator $H_V$ as the Friedrichs extension of this differential operator, as we show in section \ref{sechardy} and appendix \ref{annex0}. We restrict our attention to $H_V$ in the rest of the paper, although the differential operator might have other self-adjoint extensions in some cases. The following result is also proved in section \ref{sechardy} and appendix \ref{annex0}.

\begin{proposition} \label{propEigenvalues}In case A, $\sigma(H_V)=\{\la_n\}_{n\ge1}$, with each $\la_n$ an eigenvalue of finite multiplicity and $\la_n\to+\infty$. 

In case B, 
\[\sigma_{ess}(H_V)=[0,+\infty)\]
and
\[\sigma(H_V)\cap(-\infty,0)=\{\la_n\}_{n\ge1} \, ,\]
with each $\la_n$ an eigenvalue of finite multiplicity and $\la_n\to 0$. 

In both cases, the eigenvalues are numbered in increasing order with account of multiplicity. 
\end{proposition}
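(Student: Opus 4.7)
The plan is to reduce both cases to classical results in the spectral theory of Schrödinger operators by exploiting the growth/decay hypotheses on $V$. For Case A, I would prove directly that $H_V$ has compact resolvent, which immediately yields a purely discrete spectrum with eigenvalues accumulating only at $+\infty$. The key step is to show that the form domain
\[ Q(H_V) = \{ u \in H^1(\mathbb{R}^d) : \int_{\mathbb{R}^d} V|u|^2 \, dx < \infty \}, \]
endowed with its natural norm, embeds compactly into $L^2(\mathbb{R}^d)$. If $\{u_n\}$ is bounded in $Q(H_V)$, then on each ball $B_R$ it is bounded in $H^1(B_R)$, so Rellich--Kondrachov provides a subsequence convergent in $L^2(B_R)$; outside $B_R$, the confining lower bound $V(x) \geq C_1 |x|^a$ gives
\[ \int_{|x|>R} |u_n|^2 \, dx \leq \frac{1}{C_1 R^a} \int_{|x|>R} V |u_n|^2 \, dx \longrightarrow 0 \]
uniformly in $n$ as $R \to \infty$, so a diagonal extraction yields the desired $L^2$-convergent subsequence.

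For Case B, I would first use the Hardy-type inequality proved in section \ref{sechardy} to show that $V$ is a form-compact perturbation of $-\Delta$. Writing $V = \chi V + (1-\chi) V$, where $\chi$ is smooth, compactly supported, and vanishes near the singular points $x_1,\ldots,x_N$: the tail $(1-\chi)V$ decays to zero at infinity (since $-a<0$) and, by Hardy, is form-bounded by $\varepsilon(-\Delta)+C_\varepsilon$ for arbitrarily small $\varepsilon$, so sandwiching by $(-\Delta+1)^{-1/2}$ yields a compact operator by approximation by compactly supported bounded potentials; the localized piece $\chi V$ is handled near each $x_i$ by the local Hardy bound (using $a_i<2$), combined with Rellich--Kondrachov on the complementary compact set. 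Weyl's theorem on relatively compact perturbations then gives $\sigma_{ess}(H_V) = \sigma_{ess}(-\Delta) = [0,+\infty)$, so the negative spectrum consists of isolated eigenvalues of finite multiplicity that can accumulate only at $0$. To show that this sequence is actually infinite (so $\la_n \to 0^-$), I would apply the min-max principle: since $V(x) \leq -C_2|x|^{-b}$ with $b<2$, for every $k$ one can construct $k$ mutually orthogonal, compactly supported trial functions on disjoint annuli of large radii whose Rayleigh quotient is strictly negative, forcing at least $k$ eigenvalues below $0$.

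The main obstacle, and the technical heart of the argument, will be verifying the form-compactness of $V$ in Case B: one must combine the local Hardy estimates at each singular point $x_i$ with the long-range Hardy/decay estimate at infinity inside a single partition-of-unity argument, with enough uniformity to pass to the norm limit. Once this is established, the remaining conclusions follow from standard spectral-theoretic tools: Weyl's theorem on the essential spectrum, the min-max principle for counting eigenvalues, and the spectral theorem for self-adjoint operators with compact resolvent in Case A.
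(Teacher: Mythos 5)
Your proposal is correct, but it follows a genuinely different route from the paper. For Case A the paper does not argue via compact resolvent: it invokes Persson's formula for $\inf\sigma_{ess}(H_V)$ (quoting Agmon, after checking $V\in L^p_{loc}$ for some $p>d/2$), deduces $\inf\sigma_{ess}(H_V)\ge\sup_K\inf_{\R^d\setminus K}V=+\infty$, and concludes discreteness from lower semi-boundedness; your compact-embedding argument (Rellich--Kondrachov on balls plus the confining tail estimate $\int_{|x|>R}|u_n|^2\le C_1^{-1}R^{-a}\int V|u_n|^2$) is an equally standard and self-contained alternative that avoids citing Persson's theorem. For Case B the paper again uses Persson's formula to get $\sigma_{ess}(H_V)\subset[0,+\infty)$ and then builds explicit Weyl sequences $A_n\chi((x-x_n)/R_n)e^{i\xi\cdot x}$ for the reverse inclusion, rather than your form-compactness plus Weyl's perturbation theorem; your route requires the more delicate verification that $V$ is form-compact relative to $-\Delta$ (gluing the local singularities, where $a_i<2$ gives local $L^p$ integrability with $p>d/2$, to the decaying tail), which you correctly identify as the technical heart, whereas Persson's formula sidesteps compactness entirely at the cost of the Weyl-sequence construction. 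Finally, for the infinitude of negative eigenvalues the paper does not use annular trial functions: it simply observes (in the remark following Proposition \ref{propSpectrum}) that the Dirichlet-bracketing lower bound of Lemma \ref{lemweylneg1}, proved later anyway for the Pleijel argument, forces $N(\la)\to\infty$ as $\la\to0^-$. Your min-max construction on disjoint annuli, exploiting $b<2$ so that $\int V|\varphi_R|^2\sim-R^{d-b}$ dominates $\int|\nabla\varphi_R|^2\sim R^{d-2}$, is correct and more elementary, but logically redundant given the quantitative counting estimate the paper needs elsewhere. One small point to keep in mind on your route: since $H_V$ is defined as the Friedrichs extension from $C^\infty_c(\R^d\setminus X)$, identifying it with the form sum on $H^1(\R^d)$ (needed to apply Weyl's theorem against $-\Delta$) uses the density of $C^\infty_c(\R^d\setminus X)$ in $H^1(\R^d)$, i.e.\ that points have zero capacity for $d\ge2$; the paper makes this explicit at the end of Appendix \ref{annex0}.
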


\bigskip

The first main result of this paper is the following:

\begin{theorem}[Case A]\label{thma}
	Let $H_V = -\Delta +V$ with $V \in C^1(\mathbb{R}^d)$ as in Case A, see Definition \ref{casea}, and $a,b,c$ satisfying the following:
	
	\begin{align}\label{caseAcond}
		\frac{d}{a} + \frac{c}{3a} - \frac{1}{2} <  \frac{d}{b} \, .
	\end{align}
	
	Let the sequence $\{\lambda_n\}$, with $n\ge1$, consist of the eigenvalues of $H_V$, numbered in increasing order with account of multiplicity, and, for all $n\ge1$, let $f_n$ be any eigenfunction associated to $\lambda_n$. Then, we have the following estimate:
	
	\begin{equation}\label{Pleipos}
		\limsup\limits_{n \to \infty}\frac{\mu(f_n)}{n} \leq \gamma_d\, .
	\end{equation}
\end{theorem}

Now, these implicit conditions on $a,b$ and $c$ are hard to parse, but allow us to consider potentials where $a \neq b$. If $a=b$ we get a simpler condition on $c$:

\bigskip

\begin{corollary}
	Let $V$ be as in case A with $a=b>0$ and $c < \frac{3}{2}a$. Then, estimate (\ref{Pleipos})  holds.
\end{corollary}

The second main result of this paper is the following:

\begin{theorem}[Case B]\label{thmb}
	Let $H_V = -\Delta +V$ with $V$ as in Case B, see Definition \ref{caseb}, and $a,b,c$ satisfying the following:
	
	\begin{align}\label{caseBcond}
		- \frac{d}{a} + \frac{c}{3a} - \frac{1}{2} >  -\frac{d}{b} \, .
	\end{align}

	Let the sequence $\{\lambda_n\}$, with $n\ge1$, consist of the eigenvalues of $H_V$ below the essential spectrum, numbered in increasing order with account of multiplicity and, for all $n\ge1$, let $f_n$ be any eigenfunction associated to $\lambda_n$. Then, we have the following estimate:
	
	\begin{equation}\label{Pleineg}
		\limsup\limits_{n \to \infty}\frac{\mu(f_n)}{n} \leq \gamma_d \,.
	\end{equation}
\end{theorem}

\bigskip

Again, if $a=b$, we have the simpler condition on $c$:

\bigskip

\begin{corollary}
	Let $V$ be in case B with $a=b$ and $ \frac32 a<c $. Then, estimate (\ref{Pleineg}) holds.
\end{corollary}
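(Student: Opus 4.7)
The proof of this corollary is an immediate algebraic reduction of Theorem \ref{thmb}. The plan is simply to substitute $a=b$ into the implicit condition \eqref{caseBcond} and verify that it collapses to the stated inequality $\tfrac{3}{2}a < c$, after which Theorem \ref{thmb} applies verbatim without any further analytic work.

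Concretely, setting $a=b$ in
\[
-\frac{d}{a} + \frac{c}{3a} - \frac{1}{2} > -\frac{d}{b}
\]
makes the two terms $-d/a$ and $-d/b$ coincide and cancel, leaving the elementary inequality
\[
\frac{c}{3a} - \frac{1}{2} > 0,
\]
which rearranges to $c > \tfrac{3}{2}a$. The remaining hypotheses required by Theorem \ref{thmb} — the two-sided power bound on $V$, the gradient bound, and the singular point structure of Definition \ref{caseb} — are retained in the corollary's assumptions once one sets the upper and lower decay exponents equal. Consequently Theorem \ref{thmb} furnishes the asymptotic bound $\limsup_{n\to\infty}\mu(f_n)/n\le\gamma_d$ directly.

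There is no genuine obstacle at the level of proof: the only point that merits attention is whether the diagonal case $a=b$ is hidden away in some strict inequality during the proof of Theorem \ref{thmb}. Since Definition \ref{caseb} is stated with $0<a\le b<2$, the case $a=b$ is admissible throughout, so no extra verification is needed. As a sanity check, the Coulomb potential $V(x)=-C/|x|$ that models the hydrogen atom Hamiltonian falls into this regime with $a=b=1$ and $|\nabla V(x)|\lesssim|x|^{-2}$, hence $c=2>3/2$, showing that the corollary recovers and extends the radial result of \cite{ChaHelHof2018} in this physically important example.
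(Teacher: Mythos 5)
Your proposal is correct and matches the paper's (implicit) argument: the corollary is stated without proof precisely because setting $a=b$ in condition \eqref{caseBcond} cancels the $-d/a$ and $-d/b$ terms and leaves $\frac{c}{3a}>\frac12$, i.e.\ $c>\frac32 a$, so Theorem \ref{thmb} applies directly. The Coulomb sanity check is a nice confirmation but adds nothing beyond the substitution.
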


\bigskip

\subsection{Constraints and conditions}
The conditions on $a,b$ and $c$ defined in equations (\ref{caseAcond}) and (\ref{caseBcond}) appear in our calculations when we try to control the error terms. It is not clear if those conditions are sharp.

\bigskip

Also, we do not study the case where  there are singularities near the origin and the potential grows at infinity. We believe that our method could treat such potentials as well.

\bigskip

We also believe that our method could be generalized to manifolds, as long as a suitable version {of the Faber-Krahn} inequality and Weyl's law can be found. Indeed, as seen in \cite{Roz1974}, the conditions on $V$ given by definitions \ref{casea} and \ref{caseb} are far from being the most general under which Weyl's law holds true. Furthermore, there exists classes of potential $V$ such that $H_V$ has a discrete spectrum whose asymptotic distribution does not follow Weyl's law (see \cite{Rob1982,Sim1983} and references therein). It would be worth investigating upper bounds analogous to \eqref{eqPleijel} in those cases.

\bigskip

\subsection{Outline of the proof}

The current paper expands on the methods from \cite{Cha2018}, \cite{ChaHelHof2018} and \cite{Len2019}. In \cite{Cha2018} and \cite{ChaHelHof2018}, the two main ideas were to first partition the potentials into layers to optimally control the size of nodal domains inside each layer, and use algebraic geometry to limit the number of nodal domains that cross from one layer to another. Choosing a suitable partition to balance both estimates gave the final result. However, this limited the scope of the proof to potentials which had a basis of eigenfunctions with a polynomial behaviour. It also used the fact that level sets of the potential were hyperspheres in order to apply the algebraic geometry tools. Therefore, we need to introduce new tools to study non-radial potentials.

\bigskip

In \cite{Len2019}, one of the key ideas was to separate nodal domains depending on where the $L^2$-mass of the eigenfunction was distributed, and then to show that the number of nodal domains which have a high $L^2$-mass close to the boundary was negligible. 

\bigskip

In this paper, we combine the two methods in a novel way. Let $\Om$ be a nodal domain of some eigenfunction $f_\la$. We create a locally finite partition of unity $\{A_i^2\}$ of $\mathbb{R}^d$, then we find at least one $i$ such that $||f_\la||_{L^2\left(\Om \cap \text{supp}(A_i)\right)}$ is comparable to $||f_\la A_i||_{L^2\left(\Om \cap \text{supp}(A_i)\right)}$. This will be done in sections \ref{conspos} and \ref{consneg}. It will allow us to use {the Faber-Krahn} inequality in an optimal way in each element of the partition. This will be made more precise in section \ref{secPrelim}. By applying {the Faber-Krahn} inequality inside each region, we bound the total number of nodal domains by a main term which can be computed in terms of the Weyl asymptotics and an error term which can be bounded with suitable assumptions on $V$. This will be done in sections \ref{secpos} and \ref{secneg}.

\bigskip

At this point, all that is left to do is to compare the nodal domain estimates to the eigenvalue counting function. For the sake of completeness, we made in both cases an explicit calculation for a lower bound of the counting function, using Dirichlet bracketing. This is done in sections \ref{weylpos} and \ref{weylneg}.

\bigskip

\subsection{Acknowledgments}

 The authors would like to thank Bernard Helffer for many stimulating discussions around this topic, as well as Rami Band and Iosif Polterovich for the helpful comments and suggestions.
 
The research for this article was done in part for the PhD thesis of P. Charron at the Université de Montréal, which was submitted while this work was still in progress. As such, he would like to thank Iosif Polterovich for his guidance during the project.

C. Léna acknowledges the support of COST (European Cooperation in Science and Technology) through the COST Action CA18232 MAT-DYN-NET.

\bigskip

\section{Preliminary results}

\subsection{Lower semiboundedness and the Friedrichs extension} \label{sechardy}

Let us first give more details on the definition of our self-adjoint operator sketched in the introduction. As in Definition \ref{caseb}, we assume that we are given a finite set of points $X=\{x_1,\dots,x_N\}$ in $\mathbb R^d$ (called \emph{singular points} or \emph{poles}). We define the self-adjoint operator $H_V=-\Delta+V$ as the Friedrichs extension of the sesquilinear form $a(\cdot,\cdot)$ given by
\begin{equation*}	
	a(u,v):=\int_{\R^d}\nabla u\cdot\overline{\nabla v}+\int_{\R^d}V(x)u\overline{v}
\end{equation*} 
for $(u,v)\in C^\infty_c(\R^d\setminus X)\times C^\infty_c(\R^d\setminus X)$, {where $C^\infty_c(\R^d\setminus X)$ is the space of smooth compacyly supported function on $\R^d$}. This standard method of definition works as soon as $a(\cdot,\cdot)$ is lower semi-bounded, as seen from \cite[Th. X.23]{ReeSim2}.  In particular, $a(\cdot,\cdot)$ is closable in this case. Therefore, the definition of $H_V$ ultimately relies on the following result, proved in appendix \ref{annex0}.

\bigskip

\begin{lemma}\label{lemLSB} There exists a constant $C$, depending only on $d$ and $V$, such that, for all $u\in C^\infty_c(\R^d\setminus X)$, 
\begin{equation}\label{eqLSB}
	a(u,u)\ge -C\|u\|_{L^2}^2\,.
\end{equation}
\end{lemma}

\bigskip

We can now describe the spectrum of the operator $H_V$ according to the nature of the potential $V$ and give some properties of its eigenfunctions. In what follows, we define the form domain $\mathcal Q_V$ of $H_V$ as the completion of $C^\infty_c(\R^d\setminus X)$ for the Hilbert norm
\begin{equation*}
\|u\|_1^2:=\langle u,H_Vu\rangle+(C+1)\|u\|^2 \, ,
\end{equation*}
where $u\in C^\infty_c(\R^d\setminus X)$ and $C$ is the constant given by Lemma \ref{lemLSB}. We summarize our analysis in the following proposition, proved in appendix \ref{annex0}.

\begin{proposition} \label{propSpectrum} 
In case A, the spectrum of $H$ consists of eigenvalues of finite multiplicity tending to $+\infty$ and
\begin{equation*}
	\mathcal Q_V=\left\{u\in H^1(\R^d)\,;\,|V|^\frac12u\in L^2(\R^d)\right\}\,.
\end{equation*}

 In case B,	$\sigma_{\mbox{ess}}(H_V)=[0,+\infty)$ and $Q_V=H^1(\R^d)$.
\end{proposition}

\begin{remark} In case B, the negative part of $\sigma(H_V)$ consists of a sequence of eigenvalues with finite multiplicities. This sequence could a priori be finite or even empty. However, the estimate of Lemma \ref{lemweylneg1} shows in particular {the existence of infinitely many negative eigenvalues, so that we can write}
\begin{equation*}
	\sigma(H_V)\cap(-\infty,0)=\left\{\lambda_n; n\ge 1\right\},
\end{equation*}
where the $\lambda_n$ are repeated according to their multiplicity and $\lambda_n \to0$ as $n\to\infty$. This concludes the proof of Proposition \ref{propEigenvalues} in the introduction.
\end{remark}

\begin{remark}\label{remLSB} From the definition of the form domain, inequality \eqref{eqLSB} holds for all $u\in Q_V$, with the same constant $C$.
\end{remark}

\subsection{Partition and Rayleigh quotient estimates} \label{secPrelim}

We gather in this section several lemmas which will be used in the rest of the paper. Our proof relies, as in Pleijel's original paper, on the use of {the Faber-Krahn} inequality to have lower bounds on the volume of each nodal domain. However, in the case of Schrödinger operators, the Rayleigh quotient over each nodal domains depends on $V$ as well as $\la$ and using Faber-Krahn directly will not give a good enough upper bound in the final estimate.

\bigskip

In order to get better bounds, we will show that for every nodal domain $\Om$ of $f_\la$, it is possible to localize $f_\la$ with a partition of unity $\left\{ A_i^2\right\}$ and use {the Faber-Krahn} inequality in its refined form inside the support of at least one $A_i$. This will show that the volume of $\Om \cap \text{supp}A_i$ is bounded from below.

\bigskip

However, by using this method, the Rayleigh quotient of $A_i f_\la$ will be affected by the norms of $|\nabla A_i|^2$ and by the size of the overlap in the partition. We will need to choose both carefully in order to get the best estimates.

\bigskip

As the partitions that we will be using in our proof may depend on both $\la$ and $V$, we shall prove general results for partitions under suitable conditions, and make sure that these conditions are met in our constructions.

\bigskip

Let us denote by $\{A_i^2\}_{i\in I}$ a smooth partition of unity (meaning that $A_i\in C^\infty_c(\R^d)$ and $\sum A_i^2=1$) and by $\{U_i\}_{i\in I}$ an open covering of $\R^d$ such that $\mbox{supp}(A_i)\subset U_i$. We assume that the index set $I$ is countable and the covering $\{U_i\}$ is locally finite. 

\bigskip

Let us now consider an eigenvalue $\la$ of $H$, $f_\la$ an associated real-valued eigenfunction and $\Om$ one of its nodal domain. For all $i\in I$, we write $\Omega_i:=\Om\cap U_i$, see figure \ref{figLoc}.

\begin{figure}[H]
	
	\centering
	\includegraphics[width=7cm, height=6cm]{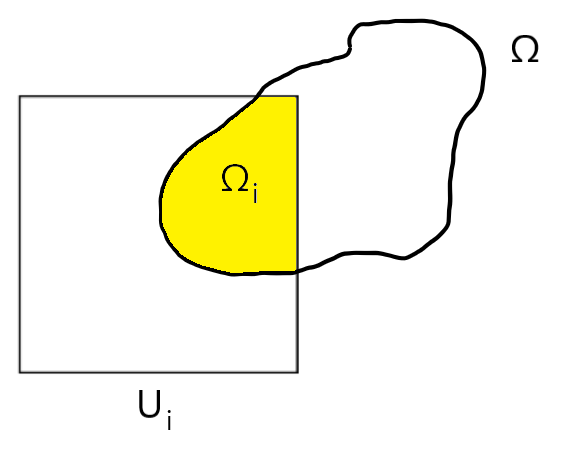}
\caption{Localization of a nodal domain.}
\label{figLoc} 
\end{figure}
\bigskip

\bigskip

\begin{definition}\label{defCLoc} For any $M>0$, we  say the nodal domain $\Omega$ of $f_\la$ is  \emph{ $M$-localized in $U_i$} when $\Om_i \neq \emptyset$ and
\begin{equation}
	\label{eqCLoc}
	\frac{\int_{\Om_i}f_\la^2}{\int_{\Om_i}A_i^2f_\la^2}\le M \,.
\end{equation}
\end{definition}

The next lemma tells us how to choose  $M$, for a given partition, so that that any nodal domain is $M$-localized in at least one $U_i$.

\begin{lemma} \label{lemCovLoc}Suppose that the \emph{multiplicity} of the covering $\{U_i\}$ is at most $M$, that is to say every point of $\R^d$ is contained in at most $M$ distinct $U_i$'s. Then, for each nodal domain $\Om$ of $f_\la$, there exists $i_0$ such that 
\begin{equation*}
	\frac{\int_{\Om_{i_0}}f_\la^2}{\int_{\Om_{i_0}}A_{i_0}^2f_\la^2}\le M \, .
\end{equation*}
\end{lemma}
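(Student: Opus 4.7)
The plan is to combine the partition-of-unity identity $\sum_i A_i^2 = 1$ with the finite-multiplicity hypothesis on the cover $\{U_i\}$, and then conclude by a simple averaging argument. The two facts are dual in a sense: the partition of unity lets us reconstruct $\int_\Omega f_\lambda^2$ from the ``localized'' masses $\int_{\Omega_i} A_i^2 f_\lambda^2$, while the multiplicity bound lets us control the sum of the ``un-localized'' masses $\int_{\Omega_i} f_\lambda^2$ by the same quantity $\int_\Omega f_\lambda^2$ (up to the constant $M$).

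First, I would use that $A_i$ is supported in $U_i$ to write
\begin{equation*}
\int_\Omega f_\lambda^2 \;=\; \int_\Omega \Bigl(\sum_{i\in I} A_i^2\Bigr) f_\lambda^2 \;=\; \sum_{i\in I}\int_{\Omega_i} A_i^2 f_\lambda^2,
\end{equation*}
where interchanging sum and integral is justified by non-negativity (Tonelli). Next, using $\sum_{i\in I} \mathbf{1}_{U_i}(x) \le M$ pointwise,
\begin{equation*}
\sum_{i\in I}\int_{\Omega_i} f_\lambda^2 \;=\; \int_\Omega f_\lambda^2 \sum_{i\in I} \mathbf{1}_{U_i} \;\le\; M\int_\Omega f_\lambda^2.
\end{equation*}
Combining the two displays yields the master inequality
\begin{equation*}
\sum_{i\in I}\int_{\Omega_i} f_\lambda^2 \;\le\; M\sum_{i\in I}\int_{\Omega_i} A_i^2 f_\lambda^2.
\end{equation*}

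Finally, I would invoke an averaging argument. If for every index $i$ with $\int_{\Omega_i} A_i^2 f_\lambda^2 > 0$ the reverse strict inequality $\int_{\Omega_i} f_\lambda^2 > M \int_{\Omega_i} A_i^2 f_\lambda^2$ held, then (since only those indices contribute to the right-hand side of the master inequality) summing would contradict the master inequality; for indices with $\int_{\Omega_i} A_i^2 f_\lambda^2 = 0$ the corresponding term on the right is zero, but such indices still contribute non-negatively to the left, so they only worsen the contradiction. Therefore some $i_0$ with $\int_{\Omega_{i_0}} A_{i_0}^2 f_\lambda^2 > 0$ satisfies the required bound, and the ratio is well-defined and $\le M$.

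There is no serious obstacle here; the only small subtlety is the bookkeeping for indices where the denominator vanishes, which is handled by restricting the averaging to the subset of indices where $A_i f_\lambda$ is not identically zero on $\Omega_i$, and observing that those indices already account for all of $\int_\Omega f_\lambda^2$ on the right.
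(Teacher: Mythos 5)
Your proof is correct and follows essentially the same route as the paper's: both arguments rest on the pointwise bound $\sum_i \mathbf{1}_{U_i}\le M$ together with $\sum_i A_i^2=1$, and conclude by summing the assumed reverse inequalities over the indices with nonvanishing denominator to reach a contradiction. Your explicit handling of the indices where $\int_{\Omega_i}A_i^2 f_\lambda^2=0$ matches the paper's restriction to indices with positive denominator, so there is nothing further to add.
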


\begin{proof} If it is not the case, then
\begin{equation*}
\int_{\Om}{\bf 1}_{U_i}f_\la^2=\int_{\Om_i}f_\la^2>M\int_{\Om_i}A_i^2f_\la^2
\end{equation*}
for all $i$ such that $\int_{\Om_i}A_i^2f_\la^2>0$ (${\bf1}_{U_i}$ is the characteristic function of the set $U_i$). Summing over all these $i$'s we find
\begin{equation*}
\int_{\Om}\left(\sum {\bf1}_{U_i}\right)f_\la^2>M\int_{\Om}\left(\sum A_i^2\right)f_\la^2=M\int_{\Om}f_\la^2\,.
\end{equation*}
 According to the hypothesis, $\sum{\bf1}_{U_i}\le M$ pointwise and we have a contradiction.
\end{proof}

\begin{remark}
Note that in the rest of the paper, the partition $\{A_i^2\}$ and the corresponding open covering $\{U_i\}$ may depend on $\la$ and $V$. However, the multiplicity of $\{U_i\}$ will be bounded throughout by a constant $M$ depending only on the dimension.
\end{remark}

As in Pleijel's paper \cite{Ple56}, we use the Faber-Krahn inequality in the following form. If $\Om\subset\R^d$ and $\rd f$ is a function which is smooth in $\Omega$ and vanishes on $\partial \Omega$, we define the Rayleigh quotient
\begin{equation*}
	R_\Om(f):=\frac{\int_\Om|\nabla f|^2}{\int_\Om f^2}=\frac{\langle -\Delta f,f\rangle_{L^2(\Om)}}{\|f\|_{L^2(\Om)}^2} \, .
\end{equation*}
Then, we have
\begin{equation}\label{eqFK}
	R_\Om(f)\ge \lambda_1^D(\Om)\ge K_d|\Om|^{-\frac2d} \, ,
\end{equation}
where $K_d$ is a constant depending only on the dimension $d$. Explicitly,
\begin{equation}\label{constFK}
	K_d=\lambda_1^D(\mathbb B^d)w_d^{\frac2d} \, ,
\end{equation}
where $\mathbb{B}^d$ denotes the unit ball in $\R^d$ and $w_d$ its volume. In the following, we will apply this inequality when $f $ is an eigenfunction of $H_V$ multiplied by an element of the partition of unity. This will  yield lower bounds for the volume of a nodal domain and ultimately an upper bound of the nodal count.

\bigskip

The next lemma will help us to derive estimates for the Rayleigh quotient of $A_i f_\la$.

\begin{lemma} We have, for all $i$,
	\begin{equation}
		\label{eqIBP}
		\int_{\Om_i} \left|\nabla (A_if_\la)\right|^2=\int_{\Om_i}(\la-V(x))\left(A_if_\la\right)^2+\int_{\Om_i}\left|\nabla A_i\right|^2f_\la^2\,.
	\end{equation}
\end{lemma}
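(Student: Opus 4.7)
The plan is to derive the identity directly from the eigenvalue equation $-\Delta f_\la + V f_\la = \la f_\la$ via an expansion-and-integration-by-parts argument on $\Om_i$. First, I would expand pointwise
\begin{equation*}
|\nabla(A_if_\la)|^2 = A_i^2|\nabla f_\la|^2 + 2 A_i f_\la \nabla A_i\cdot\nabla f_\la + f_\la^2|\nabla A_i|^2,
\end{equation*}
and use the chain rule $2 A_i \nabla A_i = \nabla(A_i^2)$ to rewrite the cross term as $\nabla(A_i^2)\cdot f_\la \nabla f_\la$. Combining it with the first term via the Leibniz rule gives
\begin{equation*}
A_i^2|\nabla f_\la|^2 + \nabla(A_i^2)\cdot f_\la\nabla f_\la = \nabla(A_i^2 f_\la)\cdot\nabla f_\la,
\end{equation*}
so the lemma reduces to the identity $\int_{\Om_i}\nabla(A_i^2 f_\la)\cdot\nabla f_\la = \int_{\Om_i}(\la-V)(A_if_\la)^2$.

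For this, I would integrate by parts on $\Om_i$. The boundary $\partial\Om_i$ decomposes into a part contained in $\partial\Om$, on which $f_\la = 0$, and a part contained in $\partial U_i$, on which $A_i = 0$ (since $A_i\in C^\infty_c(\R^d)$ is compactly supported in $U_i$); in both pieces the boundary integrand $A_i^2 f_\la\,\partial_\nu f_\la$ vanishes. Substituting $\Delta f_\la = (V-\la)f_\la$ in the bulk then yields
\begin{equation*}
\int_{\Om_i}\nabla(A_i^2 f_\la)\cdot\nabla f_\la = -\int_{\Om_i} A_i^2 f_\la\,\Delta f_\la = \int_{\Om_i}(\la-V)(A_i f_\la)^2,
\end{equation*}
and adding back $\int_{\Om_i}f_\la^2|\nabla A_i|^2$ to both sides recovers \eqref{eqIBP}.

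The main delicate point, which I would address with care, is the legitimacy of the integration by parts and of the pointwise use of the eigenvalue equation in Case B, where $V$ is singular at the poles $x_j$ and $f_\la$ belongs only to the form domain $\mathcal Q_V$ rather than being classically $C^2$. The standard remedy is to insert a family of smooth cutoffs $\chi_\ep$ vanishing on $\ep$-neighborhoods of $X$ and equal to $1$ outside $2\ep$-neighborhoods, perform the integration by parts for $\chi_\ep A_i^2 f_\la$ in place of $A_i^2 f_\la$, and let $\ep\to 0$. The Hardy-type inequalities of Lemmas \ref{lemHardyDimd} and \ref{lemHardyDim2}, together with the characterization of $\mathcal Q_V$ in Proposition \ref{propSpectrum}, ensure that $|V|^{1/2} f_\la \in L^2$ and $\nabla f_\la \in L^2$ near the singular set, which is precisely the control needed to make the cutoff terms $\int f_\la^2 |\nabla \chi_\ep|^2$ vanish in the limit and to apply dominated convergence throughout.
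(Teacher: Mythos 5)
Your proof is correct and follows essentially the same route as the paper's: expand $|\nabla(A_if_\la)|^2$, integrate the cross term by parts using that the boundary integrand vanishes (via $f_\la=0$ on $\partial\Om$ and $A_i=0$ near $\partial U_i$), and substitute the eigenvalue equation. Your additional care about justifying the integration by parts near the singular set in Case B goes beyond what the paper records, but it does not change the argument.
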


\begin{proof} Expanding the gradient of a product on the left,
	\begin{equation*}
		\int_{\Om_i} \left|\nabla (A_if_\la)\right|^2=2\int_{\Om_i} A_if_\la\nabla A_i\cdot\nabla f_\la+\int_{\Om_i} A_i^2\left|\nabla f_\la\right|^2+\int_{\Om_i}\left|\nabla A_i\right|^2f_\la^2\,.
	\end{equation*}
	Using the identity $\nabla(A_i^2)= 2A_i \nabla(A_i)$, we find
	\begin{equation*}
			2\int_{\Om_i} A_if_\la\nabla A_i\cdot\nabla f_\la=\int_{\Om_i} \nabla \left(A_i^2\right)\cdot f_\la\nabla f_\la\,.
	\end{equation*}
	Using Green's identity and recalling that $ A_i^2f_\la=0$ on $\partial \Omega_i$,  we find
	\begin{equation*}
	\int_{\Om_i} \nabla \left(A_i^2\right)\cdot f_\la\nabla f_\la=-\int_{\Om_i} A_i^2\left|\nabla f_\la\right|^2-\int_{\Om_i} A_i^2f_\la\Delta f_\la\,.
	\end{equation*}
	Since $-\Delta f_\la=(\la-V(x))f_\la$, the result follows.
\end{proof}

 Identity \eqref{eqIBP} is actually the so-called \emph{Localization Formula} or \emph{IMS  Formula}. We refer the reader to \cite[Th. 3.2]{Cyc1987} (and reference therein) as well as \cite[Th. 1.1]{HelSjo1984} in a semi-classical context.

\begin{lemma}\label{lemRQ} Let $M>0$ and let us assume that $\Om$ is $M$-localized in $U_i$. Then, 
\begin{equation}\label{eqRQ}
	R_{\Om_i}(A_if_\la)\le \sup_{U_i}(\la-V +M|\nabla A_i|^2) \, .
\end{equation}
\end{lemma}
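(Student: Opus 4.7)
The proof should be a short, direct computation combining the Localization/IMS Formula from \eqref{eqIBP} with the integral bound encoded in the $M$-localization hypothesis \eqref{eqCLoc}. The plan is to start from the identity
\begin{equation*}
	R_{\Om_i}(A_if_\la)=\frac{\int_{\Om_i}|\nabla(A_if_\la)|^2}{\int_{\Om_i}(A_if_\la)^2}
\end{equation*}
and rewrite the numerator using \eqref{eqIBP} as a sum of a ``potential'' term $\int_{\Om_i}(\la-V)(A_if_\la)^2$ and a ``commutator'' term $\int_{\Om_i}|\nabla A_i|^2f_\la^2$.

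For the potential term, I would simply pull the pointwise supremum out of the integral, yielding
\begin{equation*}
	\int_{\Om_i}(\la-V)(A_if_\la)^2\le \Bigl(\sup_{U_i}(\la-V)\Bigr)\int_{\Om_i}(A_if_\la)^2,
\end{equation*}
which is valid whatever the sign of $\la-V$ since $(A_if_\la)^2\ge 0$.  For the commutator term, the key point is that there is no factor $A_i$ on the mass $f_\la^2$, so we first pull the supremum of $|\nabla A_i|^2$ out, and then invoke the $M$-localization of $\Om$ in $U_i$ to trade $\int_{\Om_i}f_\la^2$ for $M\int_{\Om_i}(A_if_\la)^2$. This gives
\begin{equation*}
	\int_{\Om_i}|\nabla A_i|^2f_\la^2\le \Bigl(\sup_{U_i}|\nabla A_i|^2\Bigr)\int_{\Om_i}f_\la^2\le M\Bigl(\sup_{U_i}|\nabla A_i|^2\Bigr)\int_{\Om_i}(A_if_\la)^2.
\end{equation*}

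Adding the two estimates, dividing by $\int_{\Om_i}(A_if_\la)^2>0$, and using $\sup(f+g)\le \sup f+\sup g$ in reverse (i.e.\ bounding the sum of suprema from below by the supremum of the sum is weaker, so one may equivalently absorb both contributions into the single supremum $\sup_{U_i}(\la-V+M|\nabla A_i|^2)$ as stated in the lemma) yields \eqref{eqRQ}. The only real subtlety is making sure that $M$-localization, an inequality on integrals, is used \emph{after} the pointwise supremum of $|\nabla A_i|^2$ has been extracted; there is no pointwise control that would allow the opposite order, so this is the step to state carefully in the write-up.
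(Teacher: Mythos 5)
Your computation is exactly the paper's proof: apply the IMS identity \eqref{eqIBP}, pull the supremum out of the potential term, pull $\sup_{U_i}|\nabla A_i|^2$ out of the commutator term, invoke \eqref{eqCLoc} to replace $\int_{\Om_i}f_\la^2$ by $M\int_{\Om_i}(A_if_\la)^2$, and divide. You are also right that the order of operations matters in the commutator term: the localization hypothesis is an integral inequality with no weight, so the pointwise supremum of $|\nabla A_i|^2$ must be extracted first.

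The one step to fix is the final ``absorption''. What your argument (and the paper's one-line proof) actually yields is
\begin{equation*}
	R_{\Om_i}(A_if_\la)\le \sup_{U_i}(\la-V)+M\sup_{U_i}|\nabla A_i|^2 ,
\end{equation*}
and since $\sup(f+g)\le\sup f+\sup g$, this is \emph{weaker} than the single supremum written in \eqref{eqRQ}, not equivalent to it; you cannot bound a sum of suprema above by the supremum of the sum. To get \eqref{eqRQ} literally one would need the weighted localization estimate $\int_{\Om_i}|\nabla A_i|^2f_\la^2\le M\int_{\Om_i}|\nabla A_i|^2A_i^2f_\la^2$, which does not follow from \eqref{eqCLoc}. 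This discrepancy is harmless for everything downstream — Lemma \ref{lemCounting} and the nodal-count estimates immediately majorize each piece by its supremum separately anyway — but you should state your conclusion in the sum-of-suprema form rather than claiming the two bounds coincide.
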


\begin{proof} Dividing identity \eqref{eqIBP} by $\int_{\Om_i} A_i^2f_\la^2$ and taking the supremum under the integral in the numerators gives us
\[R_{\Om_i}(A_if_\la)\le\sup_{\Om_i}(\la-V)+\sup_{\Om_i}|\nabla A_i|^2\,\frac{\int_{\Om_i}f_\la^2}{\int_{\Om_i}A_i^2f_\la^2}\,.\]
Using inequality \eqref{eqCLoc} and the inclusion $\Om_i\subset U_i$, we obtain inequality \eqref{eqRQ}.
\end{proof}

\begin{lemma}\label{lemCounting} For given $A_i$, $U_i$ and $M>0$, we denote by $\mu_i$ the number of nodal domains which are $M$-localized in $U_i$. Then,
\begin{equation*}
	\mu_i\le K_d^{-\frac{d}|}|U_i| \sup_{U_i}(\la-V +M|\nabla A_i|^2)^\frac{d}2 \, .
\end{equation*}
\end{lemma}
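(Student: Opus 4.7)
The plan is to apply the Faber--Krahn inequality \eqref{eqFK} on each subdomain $\Omega_i$, taking $A_i f_\lambda$ as the test function. Combining the resulting Rayleigh-quotient lower bound with the upper bound already provided by Lemma \ref{lemRQ} will yield a lower bound on $|\Omega_i|$ that depends only on $U_i$, $V$, $\lambda$ and $|\nabla A_i|$. Summing that lower bound over all nodal domains $M$-localized in $U_i$ and exploiting disjointness will then convert it into the desired count.

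In detail, I would first fix a nodal domain $\Omega$ that is $M$-localized in $U_i$ and argue that $A_if_\lambda$, extended by zero outside $\Omega_i$, is an admissible test function for $\lambda_1^D(\Omega_i)$. This is the case because $\partial\Omega_i\subset\partial\Omega\cup\partial U_i$: $f_\lambda$ vanishes on $\partial\Omega$, while $A_i$ vanishes in a neighborhood of $\partial U_i$ since $\mathrm{supp}(A_i)\subset U_i$. The $M$-localization hypothesis guarantees that $A_if_\lambda$ does not vanish identically on $\Omega_i$, so Faber--Krahn gives
\begin{equation*}
R_{\Omega_i}(A_if_\lambda)\ge K_d|\Omega_i|^{-2/d}.
\end{equation*}
Combined with Lemma \ref{lemRQ}, this yields
\begin{equation*}
|\Omega_i|\ge\Bigl(\frac{K_d}{\sup_{U_i}(\lambda-V+M|\nabla A_i|^2)}\Bigr)^{d/2},
\end{equation*}
an explicit positive lower bound (assuming, as one must, that the supremum is strictly positive, which it is whenever the nodal domain in question actually contributes).

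The final step is a packing argument. Distinct nodal domains of $f_\lambda$ are pairwise disjoint, hence so are the sets $\Omega_i$ as $\Omega$ ranges over the nodal domains that are $M$-localized in $U_i$; and each such $\Omega_i$ sits inside $U_i$. Summing the volume lower bound over these $\mu_i$ disjoint pieces therefore gives $|U_i|$ on the right and a constant multiple of $\mu_i$ times the $\sup$-dependent factor on the left; rearranging produces the stated inequality.

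The main thing to watch out for is the very first step, namely justifying that $A_if_\lambda|_{\Omega_i}$ is legitimately in $H^1_0(\Omega_i)$. Nodal sets of Schrödinger eigenfunctions can be geometrically irregular, so one should invoke the standard fact that $f_\lambda$ belongs to $H^1_0(\Omega)$ on each of its nodal domains, combined with the observation that multiplication by a smooth compactly supported cutoff preserves this zero trace. Once that technicality is dispatched, the remaining steps are purely computational.
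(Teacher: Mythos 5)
Your proof is correct and follows essentially the same route as the paper's: Faber--Krahn applied to $A_i f_\lambda$ on $\Omega_i$, the Rayleigh-quotient upper bound of Lemma \ref{lemRQ}, and a packing argument using the disjointness of the $\Omega_i$ inside $U_i$. Note that your computation produces the constant $K_d^{-\frac{d}{2}}$ rather than the $K_d^{\frac{d}{2}}$ appearing in the statement; this is a typo in the paper (its later applications, e.g.\ in Section \ref{secpos}, indeed use $K_d^{-\frac{d}{2}}$), so your version is the correct one.
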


\begin{proof} For such a domain $\Omega$, it follows from inequalities \eqref{eqFK} and \eqref{eqRQ} that 
	\begin{equation*}
		1\le K_d^{-\frac{d}2}|\Omega_i|R_{\Omega_i}(A_if_\la)^\frac{d}2\le K_d^{-\frac{d}2}|\Omega_i|\sup_{U_i}(\la-V +M|\nabla A_i|^2)^\frac{d}2 \, .
	\end{equation*}
	We obtain the result by summing over all such $\Omega$, recalling that the corresponding $\Om_i = \Om \cap U_i$ are contained in $U_i$ and mutually disjoint.
\end{proof}

\begin{remark}
	It is possible for a nodal domain to be $M$-localized in more than one $U_i$. Our method of counting is not optimal in that it counts these {domains} multiple times. However, we do not know at this moment if any improvement could be obtained by removing the nodal domains that have been counted more than once.
\end{remark}

\section{Potentials growing at infinity} 

\subsection{Construction of the partition} \label{conspos}

\bigskip

Take the following partition: let $z=(z_1, \ldots, z_d) \in \mathbb{Z}^d$ and $\delta > 0$ small. Set $\rho = \la^{-m}$, with $m$ to be determined later.

\bigskip

Let $J_{z,\delta, \rho}$ be the hypercube $\rho(z_1-\frac{1}{2} -\delta, z_1+\frac{1}{2}+\delta) \times \rho(z_2 - \frac{1}{2} -\delta, z_2+\frac{1}{2}+\delta) \ldots \rho(z_d - \frac{1}{2} - \delta, z_d + \frac{1}{2} +\delta)$  and
$J_{z,\rho}$ the smaller hypercube $\rho(z_1-\frac{1}{2} , z_1+\frac{1}{2}) \times \rho(z_2 - \frac{1}{2} , z_2+\frac{1}{2}) \ldots \rho(z_d - \frac{1}{2} , z_d + \frac{1}{2} )$.

\bigskip

We know that for any  $ 0<\delta\le \frac{1}{2}$ and $\rho >0$, the collection $\left\{ J_{z,\delta,\rho} \, ,\, z \in \mathbb{Z}^d\right\}$ covers $\mathbb{R}^d$ and each $x \in \mathbb{R}^d$ is covered by at most $2^d$ hypercubes, see figure \ref{figCov}. We set $M:=2^d$ throughout.

\bigskip

Let $\ \{ A_{z,\delta,\rho}^2\}$ be a partition of unity subordinated to the cover $J_{z,\delta,\rho}$ {and such that $ A_{z,\delta,\rho}=1$ on $J_{z,\rho}$}. For each $\delta$, { we can construct it so that} there exists a constant $C(d,\delta)$ such that $\sup\limits_{J_{z,\delta,\rho}}|\nabla  A_{z,\delta,\rho}| \leq C(d,\delta)\rho^{-1}$.

\subsection{Nodal count estimates}\label{secpos}

{Note that in the rest of this section, $a$, $b$, $c$, $C_1$, $C_2$, $C_3$ denote the constants appearing in definition \ref{casea}.}
 We know that for any $\delta >0$, $\rho >0$ and a given nodal domain $\Om$ of $f_\la$, there exists at least one $z \in \mathbb{Z}^d$ such that $\Om$ is $M$-localized in $J_{z,\delta,\rho}$.

\bigskip

We set $\Om_z := \Om \cap J_{z,\delta,\rho}$.

\begin{figure}[H]
	\centering
	\includegraphics[width=7cm, height=6cm]{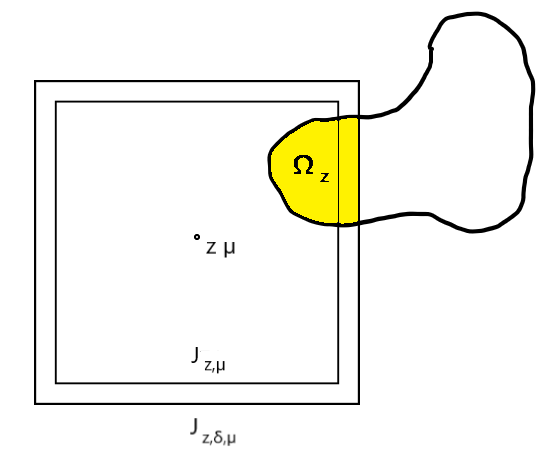}
	\caption{A covering hypercube.}
	\label{figCov}
\end{figure}
\bigskip

\bigskip

Let $B_z$ be the set of nodal domains $\Omega$ which are $M$-localized in $J_{z,\delta,\rho}$.

\bigskip

Since any nodal domain is contained in at least one set $B_z$, using lemma \ref{lemCounting}, recalling that $|J_{z,\delta,\rho}|=(1+2\delta)^d \la^{-md}$ and summing over all hypercubes gives us the following:

\begin{align}
\mu(f_\la) \leq \sum_{z \in \mathbb{Z}^d} (1+2\delta)^d \la^{-md} {K_d^{-\frac{d}{2}}} \sup_{J_{z,\delta,\rho}} \left[ \la - V + C\la^{2m} \right]_+^{\frac{d}{2}} \, ,
\end{align}

with $K_d$ defined in equation \eqref{constFK}. 

\bigskip
{
Now, let 
\[M_\la := \sum_{z \in \mathbb{Z}^d} \la^{-md}  \sup_{J_{z,\delta,\rho}} \left[ \la - V + C\la^{2m} \right]_+^{\frac{d}{2}} \, \]
and
\[W_\la := \int_{\mathbb{R}^d}(\la-V)_+^{\frac{d}{2}} \, .\]

We want to show that 

\begin{align}\label{mlawla}
	\mu(f_\la) \leq (1+2\delta)^d  K_d^{-\frac{d}2} W_\la (1 + o(1))\,,
\end{align}

\noindent when $\la \to \infty$ for a fixed $V$.

\begin{lemma}\label{Estdompos}
	We assume $c <\frac32a$. Then, there exists a constant $C(d,V,\delta)$ (depending only on $d$, $V$ and $\delta$), such that
	
	\begin{align}
		\mu(f_\la) \leq (1+2\delta)^d  K_d^{-\frac{d}{2}} W_\la	+ C(d,V,\delta) \la^{ \frac{d}{2} + \frac{d}{a} + \frac{ 2c}{3a} - 1     }\,.
	\end{align}	
\end{lemma}

The proof of this lemma will be done in appendix \ref{annex1}.

\bigskip

Now, we estimate the size of $W_\la$ from below:

\begin{align} \label{Wlapos}
	W_\la \geq \int\limits_{B(0, (\la/C_2)^{1/b})}\left[ \la - C_2 { |x|}^b \right]^{\frac{d}{2}} {\,dx}
	=C(d,V) \la^{\frac{d}{2}+ \frac{d}{b}}\,.
\end{align}

\bigskip

\subsection{Weyl's law for positive exponents}\label{weylpos}

For our purposes, we only need lower bounds for
\[ N(\lambda) := \sharp\left\{ \lambda_n < \lambda \right\}\,.\]

\bigskip

\begin{lemma}\label{Weylpos}
	Let us assume $c \leq \frac32 a$. Then, we have
	\begin{align}
		N(\la) \geq (2\pi)^{-d}w_d W_\la + O( \la^{\frac{d}{2} + \frac{d}{a} + \frac{c}{3a} - \frac{1}{2}     })\,.
	\end{align}
\end{lemma}

The proof of this lemma will be given in appendix \ref{annex1} using a Dirichlet bracketing argument, based on the partition $\{J_{z,\rho}\}$ defined in section \ref{conspos}.
}

\subsection{Completing the proof}

Now, if $c < \frac32 a$, then $ \frac{d}{2} + \frac{d}{a} + \frac{c}{3a}- \frac{1}{2} >   \frac{d}{2} + \frac{d}{a}+\frac{2c}{3a}  - 1     $.

\bigskip

We can combine lemmas \ref{Estdompos} and \ref{Weylpos} as well as {inequality} (\ref{Wlapos}) to get the following:

\begin{lemma}
	If $a,b,c$ are chosen such that 
	\begin{equation}\label{eqCondA}
	\frac{d}{2} + \frac{d}{a} + \frac{c}{3a} - \frac{1}{2} < \frac{d}{2} + \frac{d}{b} \, , 
	\end{equation}
	 then $c<3a/2$ and we have the following estimates:
	
	\begin{align}
		\mu(f_\la) \leq { K_d^{-\frac{d}{2}}} (1+2\delta)^d W_\la \left( 1 + o_\la(1) \right)\,,\\
		N(\la) \geq (2\pi)^{-d}w_d W_\la \left( 1 + o_\la(1) \right)\,.
	\end{align}

\end{lemma}
Now, since $N(\la_n) \leq n$, we get the following estimate:

\begin{align}
	\limsup\limits_{n \to \infty}\frac{\mu(f_n)}{n} \leq \limsup\limits_{n \to \infty}\frac{\mu(f_{\la_n})}{N(\la_n)} \leq \limsup\limits_{n \to \infty} \frac{{ K_d^{-\frac{d}{2}}} (1+2\delta)^d W_\la \left( 1 + o_n(1) \right)}{(2\pi)^{-d}w_d W_\la \left( 1 + o_n(1) \right)} \leq (1+2\delta)^d \gamma_d \, ,
\end{align}

\bigskip

where $\gamma_d$ is Pleijel's constant.

\bigskip

Now, since this estimate is true for any $\delta >0$, we can let $\delta \to 0$ to obtain {Theorem \ref{thma}} (we get the sufficient condition by subtracting $d/2$ from both sides of \eqref{eqCondA}).

\section{Potential vanishing at infinity}

\subsection{Construction of the partition}\label{consneg}

We want to construct a partition of unity of $\mathbb{R}^d$ with the following properties:

\begin{enumerate}
	\item The overlap between the supports of any two functions is small.
	\item Each $x \in \mathbb{R}^d$ is contained in the support of finitely many elements of the partition.
	\item If $x$ is in the support of some function, the diameter of the support will be $\asymp |x|^q$ with $q < 1$.
	\item The gradient of each function in the partition can be bounded by $C |x|^{-q}$, where $x$ is any point in the support of the function.
\end{enumerate}

We will start with a simple lemma that enables us to construct such a partition:

\begin{lemma} \label{lemSeq} For any $q<1$  and $r>0$, there exist a sequence of pairs of positive numbers $(r_i,d_i)$, $i\ge1$ and positive constants $Q,Q'$ such that
	\begin{enumerate}
		\item $r_1=r>0$, 
		\item $r_{i+1}=r_i+d_i$,
		\item $\frac{r_i}{d_i}\in \mathbb N$,
		\item $Qr_i^q\le d_i\le Q'r_i^q$,
		\item $r_i\to\infty$.
	\end{enumerate}
\end{lemma}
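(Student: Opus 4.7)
The plan is to build the sequence inductively, at each step choosing $d_i$ to be the largest value of the form $r_i/n_i$ with $n_i\in\mathbb N$ that does not exceed $r_i^q$. Concretely, starting from $r_1:=r$, I would set
\[n_i := \lceil r_i^{1-q} \rceil, \quad d_i := \frac{r_i}{n_i}, \quad r_{i+1} := r_i + d_i.\]
Properties (1) and (2) hold by construction, and $r_i/d_i = n_i \in \mathbb N$ gives (3).

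For the two-sided bound in (4), the standard ceiling estimate $r_i^{1-q} \le n_i \le r_i^{1-q} + 1$ yields, after inverting and multiplying by $r_i$,
\[\frac{r_i^q}{1 + r_i^{q-1}} \le d_i \le r_i^q.\]
Since $q-1<0$ and the sequence $(r_i)$ is nondecreasing with $r_i\ge r_1=r$, the quantity $r_i^{q-1}$ stays bounded above by $r^{q-1}$ uniformly in $i$. Setting $M:=(1+r^{q-1})^{-1}$ and $M':=1$ then gives the required uniform estimate. For (5), the sequence $(r_i)$ is strictly increasing with increments $d_i\ge M r^q>0$, so it cannot accumulate at a finite limit and must diverge.

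The only point needing care is securing a strictly positive lower constant $M$ that works from $i=1$ onward; this is where it matters that $r_i^{q-1}$ does not blow up along the sequence. The monotonicity of $(r_i)$ together with $q<1$ is precisely what prevents this: once $r_i\ge r$, the power $r_i^{q-1}$ is controlled by $r^{q-1}$. Allowing $M$ and $M'$ to depend on the fixed data $q$ and $r$ (as the statement permits) then closes the argument, and no obstacle of a deeper nature arises.
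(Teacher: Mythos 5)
Your proof is correct and essentially identical to the paper's: the same recursive choice $n_i=\lceil r_i^{1-q}\rceil$, $d_i=r_i/n_i$, $r_{i+1}=r_i+d_i$, and the same constants $M=(1+r^{q-1})^{-1}$, $M'=1$ obtained from the ceiling estimate. The only difference is in item (5), where you bound the increments below by $Mr^q$ --- which requires $0\le q<1$ (enough for the paper's application $q=c/3>0$) --- while the paper argues that boundedness of $(r_i)$ would force $n_i\le N$ and hence geometric growth $r_{i+1}\ge(1+\tfrac1N)r_i$, an argument that also covers $q\le 0$; this is a trivial adjustment, not a gap of substance.
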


The proof of this lemma will be done in appendix \ref{annex3}.

\medskip

We will now use this sequence (with $q$ still undetermined) to construct a covering of $\mathbb{R}^d$. 

\medskip

Let $r=r(V) > 0$ be large enough such that all the singularities of $V$ are contained in the ball of radius $r$ centered at 0.

\medskip

Let $ D(x_1, x_2 \ldots x_d): = \sup |x_d|$. This norm is equivalent to the standard Euclidean norm.

\medskip

Let $ C_i := \left\{ x \, | \,  r_i \leq D(x) \leq r_i + d_i \right\}$ for $i \geq 1$ {and}  $B_0:=\left\{ x \, |  D(x) \leq r \right\}$ .

\bigskip

We now cover $C_i$ with hypercubes $B_{i,j}$ of  {side-length} $d_i$. Since $r_i / d_i$ is an integer, the cubes fit perfectly. We will use an abuse of notation and label the $B_{i,j}$ as simply $B_i$. 

\bigskip

On figure \ref{figPart} is shown an example of such a partition around the origin.

\vspace{0.5cm}

\begin{figure}[H]
\label{figSquares}
	\centering
	\includegraphics[width=8cm, height=8cm]{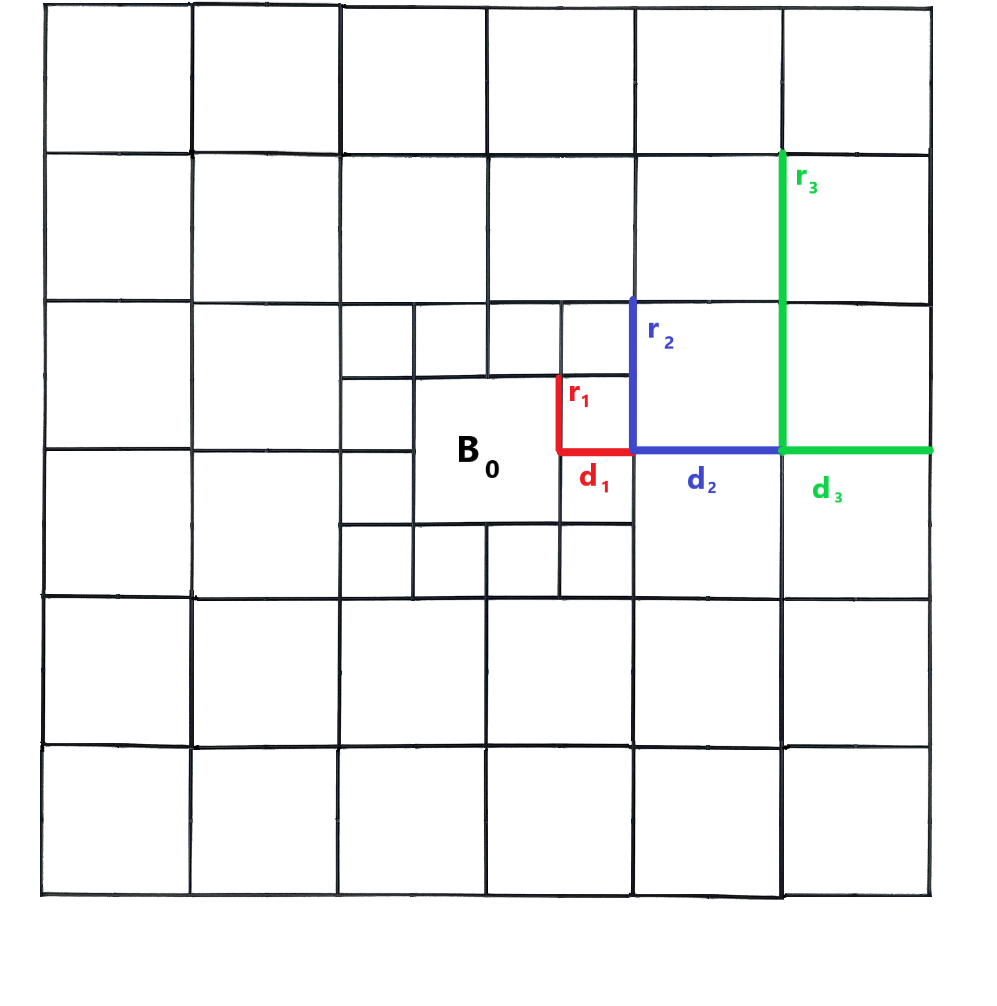}
	\caption{Partition for case B.}
	\label{figPart}
\end{figure}
\bigskip

Let $x_i$ be the center of each hypercube $B_i$. 

\bigskip

Now, let us construct a partition of unity of $\mathbb{R}^d$. Let $ {B} = [-\frac{1}{2}, \frac{1}{2}]^d$ and  $ {B_\delta}=  (-\frac{1}{2} - \delta, \frac{1}{2}+\delta)^d$. There exists a smooth function $\phi_\delta$ with the following properties:

\begin{enumerate}
	\item $\phi_\delta= 1$ in ${ B}$.
	\item  $\mbox{supp}(\phi_\delta)\subset { B_\delta}$.
	\item $\sup |\nabla \phi_\delta| \leq C(d) \delta^{-1}$.
\end{enumerate}

\bigskip

Let $B_i$ be one hypercube, let $\bar{\phi_{i}}$ be defined as the rescaled and translated $\phi_\delta$ such that $\bar{\phi_{i}} = 1$ on $B_i$ and let $B_{i,\delta}$ be defined as the rescaled and translated $B_\delta$ such that $\mbox{supp}(\phi_{i})\subset B_{i,\delta}$. Therefore, the collection $\{B_{i,\delta}\}$ form a covering of $\mathbb{R}^d$.

\bigskip

 For $\delta>0$ small enough, the multiplicity of $\{B_{i,\delta}\}$ is at most $2^d$. This means that for each $x\in\mathbb{R}^d$ there are at most $2^d$ indices $i$ such that $ x \in B_{i,\delta}$. Indeed, since the ratio $d_{i+1}/d_i$ is bounded from above and away from $0$ (see remark \ref{rmrkQ}), choosing $\delta$ small enough insures that any $x$ will only be covered by hypercubes from at most two successive rows.

\bigskip

With the rescaling, {for} $i\neq 0$, $\sup |\nabla \bar{\phi_{i}}| \leq C(d,r,\delta) \sup\limits_{B_{i,\delta}} |x|^{-q}$.

\bigskip

Since the closure of the union of all $B_i$  covers $\mathbb{R}^d$, we have $ \sum \bar{\phi_{i}}(x)^2 \geq 1$ for all $x \in \mathbb{R}^d$. Therefore, the functions $A_{i}:= {\bar{\phi_{i}}}{\left(\sum\limits_i \bar{\phi_{i}}^2\right)^{-\frac{1}{2}}}$, form a partition of unity, and they have the following property:

\bigskip

\begin{align}
	\sup |\nabla A_{i}| \leq C(d,r,\delta) \sup\limits_{B_{i,\delta}}|x|^{-q}\,.
\end{align} 

\bigskip

\subsection{Nodal count}\label{secneg}

{Note that in the rest of this section, $a$, $b$, $c$, $C_1$, $C_2$, $C_3$ denote the constants appearing in definition \ref{caseb}.}
Let  ${M:=}2^d$. We know that for any $\delta >0$ and {any} given nodal domain $\Om$, there exists at least one $i$ such that $\Om$ is {$M$-localized} in $B_{i,\delta}$.

\bigskip

Now, since $\sup\limits_{B_{0,\delta}}[\la-V] = +\infty$, we need to treat the central cube differently than the others. However, our choice of $r$ allows us to control the number of nodal domains that are {$M$-localized} in $B_{0,\delta}$:

\begin{lemma}
If $r$ is large enough that all the singularities of $V$ are contained in the ball of radius $r$ centered at zero, then the number $\mu_0$ of nodal domains which are $M$-localized in $B_{0,\delta}$ is bounded by a constant $C$ which depends on $V, d, r$ and $\delta$, but not on $\la$.
\end{lemma}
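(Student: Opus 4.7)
The plan is to run the template of Lemma \ref{lemCounting} but to patch the one step that breaks in the central cube, namely the pointwise bound $\sup_{B_{0,\delta}}(\la-V)=+\infty$ coming from the poles of $V$. I propose to replace this pointwise control by the integrated Hardy inequalities of Section \ref{sechardy}, which are precisely designed to absorb the singular negative part of $V$ under the assumption $a_i<2$.

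More concretely, fix $\epsilon\in(0,1)$. Using Definition \ref{caseb} (giving $-V(x)\sim C_i|x-x_i|^{-a_i}$ with $a_i<2$) together with the asymptotics of the Hardy weight $h$ of Lemma \ref{lemHardyDimd} (respectively $h_R$ of Lemma \ref{lemHardyDim2}) near each pole, one verifies that $-V/h\to 0$ at each $x_i$. Since $V$ is also bounded away from the poles on the compact set $\overline{B_{0,\delta}}$, this produces a constant $C_\epsilon=C_\epsilon(V,d,r,\delta)$ such that $-V(x)\le \epsilon h(x)+C_\epsilon$ throughout $B_{0,\delta}$. Now, for a nodal domain $\Om$ which is $M$-localized in $B_{0,\delta}$, set $g:=A_0 f_\la$ extended by $0$ outside $\Om_0=\Om\cap B_{0,\delta}$. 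The IMS identity \eqref{eqIBP} combined with the $M$-localization, exactly as in the proof of Lemma \ref{lemRQ}, yields
\[
R_{\Om_0}(g) \le \la - \frac{\int V g^2}{\int g^2} + M\sup_{B_{0,\delta}}|\nabla A_0|^2,
\]
while multiplying the bound $-V\le\epsilon h+C_\epsilon$ by $g^2$, integrating, and applying Hardy's inequality to $g$ (valid by density since $g\in H^1(\R^d)$ with support in $B_{0,\delta}$) gives
\[
-\int V g^2 \le \epsilon\int|\nabla g|^2 + C_\epsilon\int g^2 = \bigl(\epsilon R_{\Om_0}(g)+C_\epsilon\bigr)\int g^2.
\]
Since $\la<0$ in Case B, inserting this into the previous display and rearranging produces $(1-\epsilon)R_{\Om_0}(g)\le C_\epsilon + M\sup_{B_{0,\delta}}|\nabla A_0|^2 =: C_0$, a constant depending on $V,d,r,\delta$ but not on $\la$. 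Faber--Krahn \eqref{eqFK} then forces $|\Om_0|\ge (K_d/C_0)^{d/2}$, and disjointness of the $\Om_0$'s inside $B_{0,\delta}$ yields the desired bound $\mu_0 \le |B_{0,\delta}|(C_0/K_d)^{d/2}$.

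The main technical obstacle is the pointwise comparison $-V\le\epsilon h+C_\epsilon$: the condition $a_i<2$ of Definition \ref{caseb} is exactly what makes $-V/h$ vanish at the singular points, and in dimension two one must check that the extra logarithmic factor in $h_R$ does not spoil this, which is fine because $|x-x_i|^{2-a_i}\ln^2|x-x_i|\to 0$ as $x\to x_i$. Once this comparison is in hand, everything else is a direct adaptation of the Rayleigh quotient / Faber--Krahn argument of Lemma \ref{lemCounting}, with the crucial sign $\la<0$ ensuring that the final bound on the Rayleigh quotient is genuinely $\la$-independent.
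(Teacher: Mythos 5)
Your proof is correct and follows essentially the same route as the paper's: both use the IMS identity together with the Hardy inequality on $B_{0,\delta}$ to absorb the singular negative part of $V$ (possible precisely because $a_i<2$) into a fraction of the gradient term, obtaining a $\la$-independent bound on $R_{\Om_0}(A_0 f_\la)$, and then conclude by Faber--Krahn and disjointness. The only cosmetic difference is that you work with a general $\epsilon$ and the pointwise comparison $-V\le \epsilon h+C_\epsilon$, whereas the paper fixes $\epsilon=\tfrac12$ and phrases it as lower-boundedness of $2V+h$.
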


\begin{proof} 
Let $\Omega$ be a nodal domain of $f_\la$ which is $M$-localized in $B_{0,\delta}$ and $\Om_0 = \Om \cap B_{0,\delta}$. From identity \eqref{eqIBP}, we have
\begin{equation*}
	\int_{\Om_0}|\nabla(A_0f_\la  )|^2=\int_{\Om_0}(\la-V(x))(A_0f_\la)^2+|\nabla A_0|^2f_\la^2 \, ,
\end{equation*}
so that
\begin{align*}
	\frac12\int_{\Om_0}|\nabla(A_0f_\la  )|^2=\int_{\Om_0}\left((\la-V(x))(A_0f_\la)^2-\frac12|\nabla(A_0f_\la  )|^2+|\nabla A_0|^2f_\la^2\right)\,.
\end{align*}

Recalling that $\la<0$ we get
\begin{align*}
	\int_{\Om_0}|\nabla(A_0 f_\la  )|^2 \le 2\int_{\Om_0}|\nabla A_0|^2f_\la^2-\int_{\Om_0}  |\nabla(A_0f_\la)|^2+2V(x)(A_0f_\la)^2  \,.
\end{align*}

From lemma \ref{lemLSB} and remark \ref{remLSB}, applied to the potential $2V$, there exists $C(d,V)$ (depending only on $d$ and $V$) such that  
\begin{align*}
	\int_{\Om_0}|\nabla(A_0f_\la)|^2+2V(x)(A_0f_\la)^2 \geq -{C(d,V)}\int_{\Om_0} (A_0f_\la)^2
\end{align*} 
and therefore
\begin{align*}
	\int_{\Om_0}|\nabla(A_0f_\la  )|^2\le 2\int_{\Om_0}|\nabla A_0|^2f_\la^2+{C(d,V)} \int_{\Om_0}(A_0 f_\la)^2\,.
\end{align*}

\bigskip

Since $\Omega$ is {$M$-localized} in $B_{0,\delta}$, we get $R_{\Om_0}(A_0f_\la)\le C$, with
\begin{equation*}
	C:={C(d,V)}+2{ M}\sup_{\R^d}|\nabla A_0|^2\,,
\end{equation*}
which depends on $d,  V,  r$ and $\delta$,  but not on $\la$.

\bigskip

 Applying {the Faber-Krahn} inequality and dividing the volume of $B_{0,\delta}$ by the minimal volume of any $\Om_0$, we find
\begin{equation*}
	\mu_0\le (C{ K_d^{-1}})^{\frac{d}2}|B_{0,\delta}|\,.
\end{equation*}

Since $|B_{0,\delta}|$ depends only on $d, r$ and $\delta$, this is the desired result.
\end{proof}

\bigskip
We now turn our attention to the rest of the hypercubes. As in the previous section, using {the Faber-Krahn} inequality and summing over all $i$ we obtain an upper bound for the number of nodal domains of $f_\la$:

\begin{align}
	\mu(f_\la) \leq  \mu_0 + { K_d^{-\frac{d}{2}}}\sum_{i\geq 1}  |B_{i,\delta}|   \sup_{B_{i,\delta}} \left[ \la - V + C(d,\delta)|x|^{-2q} \right]_+^{\frac{d}{2}}\,,\\
	\leq \mu_0 + { K_d^{-\frac{d}{2}}}\sum_{i\geq 1} (1+2\delta)^d |B_{i}|   \sup_{B_{i,\delta}} \left[ \la - V + C(d,\delta)|x|^{-2q} \right]_+^{\frac{d}{2}}\,.
\end{align}

\bigskip
{Again, let 
\[\tilde{M}_\la := \sum_{i\geq 1}  |B_i|  \sup_{B_{i,\delta}} \left[ \la - V +C(d,\delta) |x|^{-2q} \right]_+^{\frac{d}{2}}\]
and  
\[\tilde{W}_\la := \int_{\mathbb{R}^d \backslash B_{0,\delta}}(\la-V)_+^{\frac{d}{2}} \, .\] 

\begin{lemma}\label{lemdomneg1}
	Assuming $c>3a/2$ and for a fixed $\delta >0$, 
	
	\begin{align}
		\mu(f_\la) \leq  \mu_0 + (1+2\delta)^d K_d^{-\frac{d}{2}} \tilde{M}_\la+ C(d,V,\delta) |\la|^{\frac{d}{2}- \frac{d}{a} + \frac{2c}{3a} -1 }\,.
	\end{align}
\end{lemma}

The proof of this lemma will be done in section \ref{annex21}.

\bigskip

Now, we estimate the size of $\tilde{W}_\la$ from below. {For $R>0$ large enough so that $B_{0,\delta}\subset B(0,R)$,}

\begin{align}
	\tilde{W}_\la\geq \int\limits_{ B\left(0, \left(\frac{C_2}{|\la|}\right)^{1/b}\right)\setminus B(0,R)}\left[ \la + C_2 { |x|}^{-b} \right]^{\frac{d}{2}}{\, dx}\\ \label{ineqwneg1}
	\geq {C(V,d)} |\la|^{\frac{d}{2}- \frac{d}{b}}\,.
\end{align}
We also note that since $b < 2$, $\mu_0 = o(\tilde{W}_\la)$.

\subsection{Weyl's law for negative exponents}\label{weylneg}

As before, let  $ \tilde{N}(\lambda) := \sharp \{ \lambda_n < \lambda  \}$.

\begin{lemma}\label{lemweylneg1}
	Assuming $ c > \frac32a$, we obtain the following estimate for $\tilde{N}(\la)$:
	
	\begin{align}
		\tilde{N}(\la) \geq (2\pi)^{-d}w_d \tilde{W}_\la + O( |\la|^{ \frac{d}{2} - \frac{d}{a} + \frac{c}{3a}- \frac{1}{2}     })\,.
	\end{align}

\end{lemma}

The proof of this lemma will be given in section \ref{annex22} using a Dirichlet bracketing argument, based on the partition $\{B_i\}$ defined in section \ref{consneg}.}

\subsection{Completing the proof}

Now, if $ \frac{c}{3a} > \frac{1}{2}$, then ${\frac{d}{2}- \frac{d}{a} + \frac{c}{3a} -\frac{1}{2} }< {\frac{d}{2}- \frac{d}{a} + \frac{2c}{3a} -1 }$. 

\bigskip

We can combine lemmas \ref{lemdomneg1} and \ref{lemweylneg1} as well as inequality (\ref{ineqwneg1}) to get the following:

\begin{lemma}
	If $a,b,c$ are chosen such that 
	\begin{equation}\label{eqCondB}
	\frac{d}{2} - \frac{d}{a} + \frac{c}{3a} - \frac{1}{2} >  \frac{d}{2}-\frac{d}{b}\,,
	\end{equation}
	 then $\frac{c}{3a}>\frac12$ and we have the following estimates:
	
	\begin{align}
	\mu(f_\la) \leq { K_d^{-\frac{d}2}} (1+2\delta)^d \tilde{W}_\la \left( 1 + { o}_\la(1) \right)\,,\\
		\tilde{N}(\la) \geq (2\pi)^{-d}w_d \tilde{W}_\la \left( 1 + { o}_\la(1) \right)\,.
	\end{align}
\end{lemma}

Now, since $\tilde{N}(\la_n) \leq n$,

\begin{align}
	\limsup\limits_{n \to \infty}\frac{\mu(f_n)}{n} \leq \limsup\limits_{n \to \infty}\frac{\mu(f_{\la_n})}{\tilde{N}(\la_n)} \leq \limsup\limits_{n \to \infty} \frac{{ K_d^{-\frac{d}2}} (1+2\delta)^d \tilde{W}_{\la_n} \left( 1 + { o}_n(1) \right)}{(2\pi)^{-d}w_d \tilde{W}_{\la_n} \left( 1 + { o}_n(1) \right)} \leq (1+2\delta)^d \gamma_d \, ,
\end{align}

\bigskip

where $\gamma_d$ is Pleijel's constant.

\bigskip

Now, since this estimate is true for any $\delta >0$, we can let $\delta \to 0$ to obtain theorem \ref{thmb} (we get the sufficient condition by subtracting $d/2$ from both sides of \eqref{eqCondB}).

\appendix

\section{ Proof of Lemma \ref{lemLSB} and Proposition \ref{propSpectrum} }
\label{annex0}

Let us first recall the following well-known Hardy-type inequalities (see for instance \cite[Sec. 1.2, Examples 1 and 2]{Cow2010}).

\bigskip

\begin{lemma}\label{lemHardy} First, we assume $d\ge 3$. Then, for any $\varphi\in C_c^\infty(\R^d)$,
\begin{equation}\label{eqHardyd}
	\int |\nabla \varphi|^2\ge \int h_d(x)|\varphi|^2\,,
\end{equation}
where 
\begin{equation*}
	h_d(x):=\left(\frac{d-2}2\right)^2\frac1{|x|^2}\,.
\end{equation*}
Then, we assume $d=2$. Then, for any $R>0$ and $\varphi\in C_c^\infty(D(0,R))$,
\begin{equation}\label{eqHardy2}
	\int|\nabla \varphi|^2\ge \int h_{2,R}(x)|\varphi|^2\,,
\end{equation}
where $D(0,R)\subset\mathbb{R}^2$ is the open disk of radius $R$ centered at $0$ and
\begin{equation*}
	h_{2,R}(x):=\frac1{ 4|x|^2\ln^2\left(\frac{|x|}{R}\right)}\,.
\end{equation*}

\end{lemma}

\bigskip

We can now proceed with the proof of lemma \ref{lemLSB}. In case A, $V\ge0$ and the result is obvious (with $C=0$). Let us consider case B. Recall that we denote by $X=\{x_1,\dots,x_N\}$ the set of singular points of $V$. Let us treat separately the cases $d\ge3$ and $d=2$. 

\bigskip

In  case $d\ge3$, we have, by translating the inequality \eqref{eqHardyd}, 
\[\int |\nabla u|^2 \ge \int h_d(x-x_i)|u|^2\]
for all $1\le i \le N$, hence
\begin{align}\label{eqHardyMult}
	\int |\nabla u|^2 \ge \int h(x)|u|^2\,,
\end{align}
with
\begin{align}\label{eqHardyWeight}
	h(x):=\frac{1}{N} \sum_{i=1}^N h_d(x-x_i)\,.
\end{align}
From the behavior of $V$ at each $x_i$, given in definition \ref{caseb}, $\lim_{x\to x_i}h(x)+V(x)=+\infty$. It follows that $h+V$ is bounded from below. We can then take $C=-\inf_{\mathbb{R}^d}(h+V)$. 
\bigskip

In case $d=2$, we choose for each $1\le i\le N$ a radius $r_i>0$, such that the open disks $D(x_i,2r_i)$ are pairewise disjoint. In addition, we choose smooth functions  $\chi_i$, compactly supported in $D(x_i,2r_i)$, such that $\chi_i\le 1$ in $\mathbb{R}^2$ and $\chi_i=1$ in $ D(x_i,r_i)$. We have pointwise
\[|\nabla (\chi_i u)|^2\le 2\chi_i^2|\nabla u|^2+2|\nabla \chi_i|^2|u|^2\le 2|\nabla u|^2+2|\nabla \chi_i|^2|u|^2\,,\]
and therefore
\[\int |\nabla u|^2\ge \frac{1}{2}\int |\nabla (\chi_i u)|^2-\int |\nabla \chi_i|^2|u|^2\,.\]
Using inequality \eqref{eqHardy2},
\[\int|\nabla u|^2\ge\int\left(\frac12\chi_i^2h_{2,2r_i}(x-x_i)-|\nabla \chi_i|^2\right)|u|^2\,.\]
Summing over all $i$'s, we find
\begin{align}\label{eqHardyMult2D}
\int |\nabla u|^2\ge \int h(x)|u|^2\,,
\end{align}
where
\begin{align}\label{eqHardyWeight2D}
h(x):=\frac1N\sum_{i=1}^N\left(\frac12\chi_i^2h_{2,2r_i}(x-x_i)-|\nabla \chi_i|^2\right).
\end{align}
As in the case $d\ge3$, $\lim_{x\to x_i}h(x)+V(x)=+\infty$ for all $1\le i\le N$, and therefore $h+V$ is bounded from below and we can take $C=-\inf_{\mathbb R^2}(h+V)$.

\bigskip

The analysis leading to proposition \ref{propSpectrum} is rather classical (see for instance \cite{Agm}). For the sake of completeness and the convenience of the reader, let us recall the main steps of the proof. In cases A and B, the potential $V$ belongs to $L^p_{loc}(\R^d)$ for some $p>d/2$. Indeed, it suffices to choose $p$ such that $p>d/2$ and ${a_i}\,p<d$ for all $1\le i \le N$. According to \cite[Th. 3.2, pp. 44-45, Eq. (3.16)]{Agm}, we can then apply Persson's formula to compute the bottom of the essential spectrum of $H_V$:
\begin{equation}
\label{eqPersson}
	\inf \sigma_{{ess}}({H_V})=
	\sup_{K}\inf_{
	\begin{array}{c}
	\varphi \in C_c^\infty(\R^d\setminus K)\\
	 \varphi\neq0
	 \end{array}}
	 \frac{\langle \varphi, { H_V}\varphi\rangle}{\|\varphi\|^2}\,,
\end{equation}
where $K$ ranges over all compact subsets of $\R^d$. It follows immediately that 
\begin{equation*}
	\inf \sigma_{{ess}}({H_V})\ge \sup_{K} \inf_{\R^d\setminus K}V\,.
\end{equation*}

In case A, $V(x)\to+\infty$ as $|x|\to +\infty$, and thus
\begin{equation*}
	\inf \sigma_{{ess}}({H_V})=+\infty\,,
\end{equation*}
that is to say $\sigma_{{ess}}({ H_V})=\emptyset$. Since ${H_V}$ is lower semi-bounded, $\sigma({H_V})$ is a sequence of eigenvalues with finite multiplicities tending to $+\infty$.

\bigskip

In case B, $V(x)\to 0$ as $|x|\to +\infty$ and we obtain 
\begin{equation*}
	\sigma_{{ess}}({H_V})\subset[0,+\infty)\,.
\end{equation*} 

\bigskip

We can easily check the reverse inclusion by constructing appropriate Weyl sequences. Indeed, let us set, for $\xi\in \R^d$,
\begin{equation*}
	\varphi_{\xi,n}(x):=C_n\chi\left(\frac{x-x_n}{R_n}\right)\exp(i \xi\cdot x)\,,
\end{equation*}
where $\chi$ is a smooth function in $\R^d$ with compact support, $x_n\in\R^d$, $R_n>0$, and $C_n>0$ is defined by $\|\varphi_{\xi,n}\|=1$. By a suitable choice of sequences $|x_n|\to+\infty$ and $R_n\to +\infty$, we can ensure that $\langle \varphi_{\xi,n},{ H_V}\varphi_{\xi,n}\rangle\to|\xi|^2$ and $(\varphi_{\xi,n})$ converges to $0$ weakly in $L^2(\R^d)$, which implies $|\xi|^2 \in \sigma_{ess}({ H_V})$. We have shown that $\sigma_{{ess}}({H_V})=[0,+\infty)$.

\bigskip

For the characterization of the form domain, we refer the reader to \cite[Th. 8.2.1]{Dav1995} for case A and \cite[Th. 8.2.3]{Dav1995} for case B. Note that in the latter case, the result is given for $d\ge3$. The proof  can be extended to $d=2$ using inequality \eqref{eqHardyMult2D}.

\section{Proofs of lemmas \ref{Estdompos} and \ref{Weylpos} }\label{annex1}

\subsection{Proof of lemma \ref{Estdompos}} \label{annex11}
Recall that the {side-length} of the cubes in the partition is $\rho (1+2\delta)$ with $\rho = \la^{-m}$.
Also, we {have} the following conditions on $V$:

\begin{align}
	V(x) \geq C_1 |x|^a \, ,\\
	V(x) \leq C_2 |x|^b \, ,\\
	|\nabla V(x)| \leq C_3 |x|^c \, ,
\end{align}

with $c < \frac{3}{2}a$.

\bigskip

{Recall also that 
\[M_\la := \sum_{z \in \mathbb{Z}^d} \la^{-md}  \sup_{J_{z,\delta,\rho}} \left[ \la - V + C\la^{2m} \right]_+^{\frac{d}{2}}\]
and
\[W_\la := \int_{\mathbb{R}^d}(\la-V)_+^{\frac{d}{2}}\,.\]
We also define
\[m_\la := \sum_{z \in \mathbb{Z}^d} \la^{-md} \inf_{J_{z,\delta,\rho}} \left[ \la - V  \right]_+^{\frac{d}{2}}\] and 
\[A_\la := M_\la - W_\la\,.\]

\bigskip

Since $A_\la\leq M_\la-m_\la$, it suffices to show that  $A_\la	\leq C(d,V,\delta) { \la^{ \frac{d}{2} + \frac{d}{a} + \frac{ 2c}{3a} - 1     }}$  in order to prove lemma \ref{Estdompos}.}

\bigskip

 In this section, $C$ will denote constants which may change from line to line but which never depend on $\la$. However we may write down the dependency of $C$ on different parameters to emphasize this point. {The same remark applies to the ``O" and ``o" occuring in the formulas.}

\bigskip

We have the following estimates for $A_\la$:

\begin{align}
\nonumber	A_\la \leq M_\la - m_\la \leq \la^{-md} \sum_{z \in \mathbb{Z}^d} \left[ \sup_{J_{z,\delta,\rho}} \left[ \la - V +{\rd C}\la^{2m} \right]_+^{\frac{d}{2}} - \inf_{J_{z,\delta,\rho}} \left[ \la - V \right]_+^{\frac{d}{2}} \right]\\
	\leq \la^{-md}  \sum_{z \in \mathbb{Z}^d} \left[ \sup_{J_{z,\delta,\rho}} \left[ \la - V  \right]_+^{\frac{d}{2}}- \inf_{J_{z,\delta,\rho}} \left[ \la - V \right]_+^{\frac{d}{2}}   + C(d,\delta)\la^{2m}\sup_{J_{z,\delta,\rho}}\left[ \la - V + C\la^{2m}\right]_+^{\frac{d}{2}-1} \right] \, ,
\end{align}

\noindent where we have used the {inequality} $(a+b)^k \leq a^k + k b (a+b)^{k-1}$ for any $a,b,k \in \mathbb{R}^+$, with $k\geq1$. We now bound the oscillation of $[\la - V]^{\frac{d}{2}}$ over a cube using farthest distance between two points and the bounds on the gradient, recalling that the diameter of the cubes is less or equal than $C(d) \la^{-m}$:

\begin{align}
\nonumber \la^{-md}  \sum_{z \in \mathbb{Z}^d} \left[ \sup_{J_{z,\delta,\rho}} \left[ \la - V  \right]_+^{\frac{d}{2}}- \inf_{J_{z,\delta,\rho}} \left[ \la - V \right]_+^{\frac{d}{2}}   + C(d,\delta)\la^{2m}\sup_{J_{z,\delta,\rho}}\left[ \la - V + C\la^{2m}\right]_+^{\frac{d}{2}-1} \right]	\\ \leq \la^{-md}  \sum_{z \in \mathbb{Z}^d} \left[ C(d) \la^{-m} \sup_{J_{z,\delta,\rho}}|\nabla V| \sup_{J_{z,\delta,\rho}}\left[ \la - V  \right]_+^{\frac{d}{2}-1} + C(d,\delta)\la^{2m}\sup_{J_{z,\delta,\rho}}\left[ \la - V + C\la^{2m}\right]_+^{\frac{d}{2}-1} \right] \, . 
\end{align}
To estimate the last term we use the {inequality} $(a+b)^k \leq 2^k a^k + 2^k b^k$:

\begin{align}
\nonumber \la^{-md}  \sum_{z \in \mathbb{Z}^d} \left[ C(d) \la^{-m} \sup_{J_{z,\delta,\rho}}|\nabla V| \sup_{J_{z,\delta,\rho}}\left[ \la - V  \right]_+^{\frac{d}{2}-1} + C(d,\delta)\la^{2m}\sup_{J_{z,\delta,\rho}}\left[ \la - V + C\la^{2m}\right]_+^{\frac{d}{2}-1} \right]\\  \label{3termpositive}
	\leq \la^{-md}\sum_{z \in \mathbb{Z}^d} \left[ C(d) \la^{-m} \sup_{J_{z,\delta,\rho}}|\nabla V| \sup_{J_{z,\delta,\rho}}\left[ \la - V  \right]_+^{\frac{d}{2}-1}  +
	  C(d,\delta) \la^{2m} \sup_{J_{z,\delta,\rho}} \left[ \la - V \right]_+^{\frac{d}{2}-1} \right] \\ \nonumber + \sum_{\substack{z \in \mathbb{Z}^d\\ \sup\limits_{J_{z,\delta,\rho}} V \leq \la +  C\la^{2m}    }} C(d,\delta)\,.
\end{align}

We now assume that $m< \frac12$ so that $\la^{2m} = o(\la)$. As we will see later, 
we can find such an $m$ for suitable $a$, $b$ and $c$.

\bigskip

Since $V(x) \geq C_1 |x|^a$, the second sum in equation (\ref{3termpositive}) can be estimated by the number of cubes of {side-length} $\la^{-m}$ in a ball of radius $C(d,V) \la^{\frac1a}$. We can estimate it directly:

\begin{align}\label{eq41}
	\sum_{\substack{z \in \mathbb{Z}^d\\ \sup\limits_{J_{z,\delta,\rho}} V \leq \la + C\la^{2m}    }}  C(d,\delta) \leq C(d,V,\delta) \la^{\frac{d}{a}+ dm}\,.
\end{align}

Now, by the definition of $J_{z,\delta, \rho}$, we get that for all exponents $M >0$, there exists a constant $C(d,M)$ such that 

\begin{align}
	|J_{z,\delta,\rho}|	\sup\limits_{ x \in { J_{z,\delta,\rho}}} |x|^M \leq C(d,M) \int_{{ J_{z,\delta,\rho}}} |x|^M{\,dx} \,.
\end{align}

We can now bound the first sum in inequality \eqref{3termpositive} using the bounds on $V$ and $\nabla V$:

\begin{align}
\nonumber	\la^{-md}\sum_{z \in \mathbb{Z}^d} \left[ C(d) \la^{-m} \sup_{J_{z,\delta,\rho}}|\nabla V| \sup_{J_{z,\delta,\rho}}\left[ \la - V  \right]_+^{\frac{d}{2}-1}  +
	C(d,\delta) \la^{2m} \sup_{J_{z,\delta,\rho}} \left[ \la - V \right]_+^{\frac{d}{2}-1} \right]	\\ \leq C(d,V,\delta)  \int\limits_{B(0, (\la/C_1)^{1/a})} \left[ \la^{-m} {|x|}^c + \la^{2m}  \right] \left[ \la - C_1 {|x|}^a  \right]^{\frac{d}{2}-1} \,dx \\
	\leq C(d,V,\delta) \left[ \la^{-m + \frac{c}{a} + \frac{d}{a} + \frac{d}{2} - 1} + \la^{2m + \frac{d}{a}+ \frac{d}{2}-1} \right]\,. \label{eq59}
\end{align}

In order to balance the two terms on the right-hand side of inequality (\ref{eq59}), we need that $$-m + \frac{c}{a} = 2m\,.$$

\bigskip

Therefore, putting $m = \frac{c}{3a}$ and combining equations (\ref{eq41}) and (\ref{eq59}) gives us the following estimate for $A_\la$:

\bigskip

\begin{align}
	A_\la	\leq C(d,V,\delta) \left[ \la^{\frac{2c}{3a} + \frac{d}{a} + \frac{d}{2} - 1     } + \la^{\frac{d}{a} + \frac{dc}{3a}}   \right]\,.
\end{align}	

\bigskip

Now, in the case $d=2$, {the exponents in the two terms are equal}. In the case $d\geq 3$, since we assumed that $m < \frac{1}{2}$, the term on the left dominates and we obtain that for any dimension,  

\begin{align}
	A_\la \leq C(d,V,\delta) \la^{\frac{2c}{3a} + \frac{d}{a} + \frac{d}{2} - 1     } \, . \label{boundA}
\end{align}

\bigskip

This completes the proof of lemma \ref{Estdompos}.

\bigskip

\subsection{Proof of lemma \ref{Weylpos}} \label{}

Let $R_\Om$ be the Rayleigh quotient for the Dirichlet Laplacian on $\Om$ and $Q_\Om (f)$ the modified Rayleigh quotient for $H = -\Delta +V$:

$$Q_\Om(f) = \frac{<-\Delta f,f>_{L^2(\Om)}+<V f,f>_{L^2(\Om)}}{<f,f>_{L^2(\Om)}}.$$

\bigskip

{Recall that $J_{z,\rho}$ denotes the hypercubes} $\rho(z_1-\frac{1}{2} , z_1+\frac{1}{2}) \times \rho(z_2 - \frac{1}{2} , z_2+\frac{1}{2}) \ldots \rho(z_d - \frac{1}{2} , z_d + \frac{1}{2} )$. Since they do not overlap, we can use them to do a Dirichlet bracketing for the eigenvalue count.  More specifically, we use the following facts. First,
	\begin{align*}N(\la)=\max\{\dim(V)\,|\,V\subset H^1(\mathbb{R}^d) \mbox{ such that }Q_{\mathbb{R}^d}(f)<\la \mbox{ for all } f\in V\setminus\{0\}\}\,,
	\end{align*}
	with $V$ a vector space. Second,
	\begin{align*}
		N(\la)\ge\sum_{z\in\mathbb{Z}^d}\hat{N}_z(\la)\,,
	\end{align*}
	where
	\begin{equation*}
		\hat{N}_z(\la)=\max\left\{\dim(V)\,|\,V\subset H^1_0(J_{z,\rho}) \mbox{ such that } Q_{J_{z,\rho}}(f)<\la \mbox{ for all } f\in V\setminus\{0\}\right\}\,.
	\end{equation*}
	Finally, we obviously have, for all $z\in\mathbb{Z}^d$,
	\begin{equation*}
		\hat{N}_z(\la)\ge N_z(\la)\,,
	\end{equation*}
	where $		N_z(\la)=\max \left\{\dim(V)\,|\,V\subset H^1_0(J_{z,\rho}) \mbox{ such that }R_{J_{z,\rho}}(f)<\la-\sup_{J_{z,\rho}} V \mbox{ for all } f\in V\setminus\{0\} \right\}$.
	
	\bigskip
	
	We recognize $N_z(\la)$ as  the eigenvalue count below $\la-\sup_{J_{z,\rho}}V$ for the Dirichlet Laplacian on the hypercube $J_{z,\rho}$. We can use the explicit formula found in \cite[lemma 2.4]{GitLar2017} to get :
	
	\begin{align}
		N_z(\la)= |J_{z,\rho}| (2\pi)^{-d}w_d \inf\limits_{J_{z,\rho}}(\la-V)_+^{\frac{d}{2}} + O\left(|\partial J_{z,\rho}| \inf\limits_{J_{z,\rho}}(\la-V)_+^{\frac{d}{2} - \frac{1}{2}}\right)  \, ,\label{eq26}
	\end{align}

\noindent where $w_d$ is the volume of the unit ball in $\mathbb{R}^d$. 

\bigskip

Recall that we want to show that $N(\la) \geq (2\pi)^{-d}w_d W_\la + O( \la^{\frac{d}{2} + \frac{d}{a} + \frac{c}{3a} - \frac{1}{2}     })$.  By summing equation (\ref{eq26}) over all cubes, we obtain the following estimate:

\begin{align}
	N(\la) \geq (2\pi)^{-d}w_d\sum_{z \in \mathbb{Z}^d}  |J_{z,\rho}|  \inf\limits_{J_{z,\rho}}(\la-V)_+^{\frac{d}{2}} + O\left(\sum_{z \in \mathbb{Z}^d}|\partial J_{z,\rho}| \inf\limits_{J_{z,\rho}}(\la-V)_+^{\frac{d}{2} - \frac{1}{2}}\right)\\
	\geq (2 \pi)^{-d}w_d m_\la + O\left(\sum_{z \in \mathbb{Z}^d}|\partial J_{z,\rho}| \inf\limits_{J_{z,\rho}}(\la-V)_+^{\frac{d}{2} - \frac{1}{2}}\right) \label{eq58} \, ,
\end{align}

\bigskip
\noindent where $m_\la$ is the same as in section \ref{annex11}. Since $m_\la = W(\la) + O(A_\la)$, by choosing $m=c/3a$ the first term in (\ref{eq58}) is equal to $(2\pi)^{-d}w_d W_\la + O(\la^{\frac{2c}{3a} + \frac{d}{a} + \frac{d}{2} - 1     })$ by inequality  (\ref{boundA}). 

\bigskip
We now bound the second term:

\begin{align}
\nonumber	\sum_{z \in \mathbb{Z}^d}|\partial J_{z,\rho}| \inf\limits_{J_{z,\rho}}(\la-V)_+^{\frac{d}{2} - \frac{1}{2}}\leq{ C(d)\la^{m}}\sum_{z \in \mathbb{Z}^d}\la^{-md} \inf\limits_{J_{z,\rho}}(\la-V)_+^{\frac{d}{2} -\frac{1}{2}} \\
	\leq { C(d)}\la^m \int\limits_{B(0, (\la/C_1)^{1/a})} (\la-C_1 |x|^a)_+^{\frac{d}{2} -\frac{1}{2}}\,dx \\
	\leq C(d,V) \la^{\frac{c}{3a} + \frac{d}{a} + \frac{d}{2} - \frac{1}{2}     }\,.
\end{align}

\bigskip

Now, since $2c <3a$, then $\frac{c}{3a} - \frac{1}{2}    > \frac{2c}{3a} -1$ and we have the final estimate for $N(\la)$:

\bigskip

\begin{align}
	N(\la) \ge (2\pi)^{-d}w_d W_\la + O \left(\la^{\frac{c}{3a} + \frac{d}{a} + \frac{d}{2} - \frac{1}{2}     }\right)\,.
\end{align}

\bigskip

This completes the proof of lemma \ref{Weylpos}.

\section{Proofs of lemmas \ref{lemdomneg1} and \ref{lemweylneg1}}\label{annex2}

\subsection{Proof of lemma \ref{lemdomneg1}}\label{annex21}

{Recall that the diameter of the hypercubes $B_{i,\delta}$ in the partition is less than $C(d) \sup\limits_{B_{i,\delta}}|x|^q$ and their volume is less than $C(d) \sup\limits_{B_{i,\delta}}|x|^{qd}$. 
	
	\bigskip
	
	Again, let 
\[\tilde{M}_\la := \sum_{i\geq 1}  |B_i|  \sup_{B_{i,\delta}} \left[ \la - V +  C(d,\delta) |x|^{-2q} \right]_+^{\frac{d}{2}}\]
and
\[\tilde{W}_\la := \int_{\mathbb{R}^d \backslash B_{0,\delta}}(\la-V)_+^{\frac{d}{2}}\,.\]
We also define
\[\tilde{m}_\la := \sum_{i\geq 1}  |B_i| \inf_{B_{i,\delta}} \left[ \la - V  \right]_+^{\frac{d}{2}}\]
and
\[\tilde{A}_\la := \tilde{M}_\la - \tilde{W}_\la\,.\]

Finally, recall that 

\begin{align}
	V(x) \geq -C_1 |x|^{-a} \, ,\\
	V(x) \leq -C_2 |x|^{-b} \, ,\\
	|\nabla V(x)| \leq C_3 |x|^{-c} \, ,
\end{align}

\noindent with $c > \frac{3}{2} a$.

\bigskip

Since $\tilde{A}_\la \leq \tilde{M}_\la-\tilde{m}_\la$, it suffices to show that  $\tilde{A}_\la \leq C(d,V,\delta) |\la|^{\frac{d}{2}- \frac{d}{a} + \frac{2c}{3a} -1 }$ in order to prove lemma \ref{lemdomneg1}. }

\bigskip

We now fix $r >0$ such that all the singularities of $V$ are contained in the ball of radius $r$ centered at the origin (if there are no singularities, fix $r=1$).
\bigskip

In this section, $C$ will denote constants which may change from line to line but which never depend on $\la$. However we may write down the dependency of $C$ on different parameters to emphasize this point. {The same remark applies to the ``O" and ``o" occuring in the formulas.}

\bigskip

We will follow a strategy similar to the one we used for positive potentials in order to bound $\tilde{A}_\la$. Again, recalling that the diameter of $B_{i}$ is less than $C(d,\delta)\sup\limits_{B_{i,\delta}}|x|^{q}$, the volume of $B_{i,\delta}$ is less than $C(d,\delta) \sup\limits_{B_{i,\delta}}|x|^{qd} $ and using the {inequalities} $(a+b)^k \leq a^k + kb(a+b)^{k-1}$ and $(a+b)^k \leq 2^ka^k+2^kb^k$, we obtain the following:

\begin{align}
	\tilde{A}_\la  \leq \sum_{i\geq 1} |B_i| \left[  \sup_{B_{i,\delta}} \left[ \la - V + C(d,\delta) |x|^{-2q} \right]_+^{\frac{d}{2}} - \inf_{B_{i,\delta}} \left[ \la - V \right]_+^{\frac{d}{2}} \right]\\
	\leq \sum_{i\geq 1} |B_i| \left[  \sup_{B_{i,\delta}} \left[ \la - V  \right]_+^{\frac{d}{2}}- \inf_{B_{i,\delta}} \left[ \la - V \right]_+^{\frac{d}{2}}   + C(d,\delta) \sup\limits_{B_{i,\delta}}|x|^{-2q}\sup_{B_{i,\delta}}\left[ \la - V + C(d,\delta) |x|^{-2q}\right]_+^{\frac{d}{2}-1} \right]\\
	\leq  \sum_{i\geq 1} |B_i| \left[ C(d,\delta) \sup_{B_{i,\delta}}|x|^{q}\sup_{B_{i,\delta}}|\nabla V| \sup_{B_{i,\delta}}\left[ \la - V  \right]_+^{\frac{d}{2}-1} + C(d,\delta) \sup\limits_{B_{i,\delta}}|x|^{-2q}\sup_{B_{i,\delta}}\left[ \la - V \right]_+^{\frac{d}{2}-1} \right]\label{eq68} \\
+\nonumber \sum_{\substack{{i\geq 1}  \\ { \sup\limits_{B_{i,\delta}} V(x) -C(d,\delta)|x|^{-2q} \leq \la}}} C(d,\delta) 
\end{align}

\bigskip

Now, the last term in (\ref{eq68}) is bounded {(up to a constant factor)} by the number of hypercubes $B_i$ which intersect the region   $ \left\{  V(x) -C(d,\delta)|x|^{-2q} \leq\la\right\}$. In order to estimate this, let $Q(x)$ be the inverse of the volume of the hypercube that contains $x$. From the construction of our partition, $Q(x) \leq C |x|^{-qd}$. {We assume that  $2q>a$,  which will be justified later by a suitable choice of $q$.} Hence, we can bound the last term by the following (recalling that $x_i$ is the center of $B_i$):
\begin{align}
	\sum_{\substack{{i\geq 1} \\  { \sup\limits_{B_{i,\delta}} V(x) -C(d,\delta)|x|^{-2q} \leq \la}}} C(d,\delta) &\leq  C(d,\delta)\sum_{\substack{{i\geq 1}  \\ { \sup\limits_{B_{i,\delta}} -|x|^{-a} \leq C(d,\delta,V) \la}} } Q(x_i)|B_i|\\
	 	&\leq C(d,\delta, V)\int\limits_{B\left(0,C |\la|^{-1/a}\right) \backslash B_0} |x|^{-qd}{\,dx} \\ & \leq C(d,\delta, V) |\la|^{\frac{qd}{a} - \frac{d}{a}}\,. \label{eq2}
\end{align}

We can combine estimates (\ref{eq68}) and \eqref{eq2} as well as bounds on $|V|$ and $|\nabla V|$:

\bigskip

\begin{align}
	\tilde{A}_\la \leq  C(d,\delta, V) |\la|^{\frac{qd}{a} - \frac{d}{a}} + \sum_{i \geq 1} |B_i| C(d,\delta) \sup_{B_{i,\delta}}\left[ \la - V\right]_+^{\frac{d}{2}-1} \left[ \sup_{B_{i,\delta}} |x|^{-2q} + \sup_{B_{i,\delta}} |x|^{q} \sup_{B_{i,\delta}}|\nabla V| \right] \\
	\leq  C(d,\delta, V) |\la|^{\frac{qd}{a} - \frac{d}{a}} + \sum_{i \geq 1} |B_i|  C(d,\delta, V) \sup_{B_{i,\delta}} |x|^{\frac{-ad}{2}+a} \left[ \sup_{B_{i,\delta}} |x|^{-2q}+ \sup_{B_{i,\delta}} |x|^{q} \sup_{B_{i,\delta}} |x|^{-c} \right]\,. \label{eq73}
\end{align}

\bigskip

Now, by the construction of the cubes $B_i$, we have that for every $i\geq1,\delta$ and exponent $P<0$, 
\[\sup_{B_{i,\delta}} |x|^P \leq C(P) \inf_{B_{i,\delta}} |x|^P\,.\]
 Hence, we can estimate the sum on the right-hand side of \ref{eq73} by an integral:

\begin{align}
	\tilde{A}_\la	\leq C(d,\delta, V) |\la|^{\frac{qd}{a} - \frac{d}{a}} +  \int\limits_{B_0(C |\la|^{-1/a}) \backslash B_0(r)} C(d,\delta, V) |x|^{\frac{-ad}{2} +a} \left[|x|^{-2q} + |x|^q |x|^{-c}   \right] {\,dx}\\
	\leq C(d,\delta, V) |\la|^{\frac{qd}{a} - \frac{d}{a}} + C(d,\delta, V) |\la|^{\frac{d}{2}-1 + \frac{2q}{a} - \frac{d}{a}} + C(d,\delta, V) |\la|^{\frac{d}{2}-1 + \frac{c}{a}-\frac{q}{a}- \frac{d}{a}}\,.
\end{align}

\bigskip

Now, taking $q=\frac{c}{3}$ balances the last two terms and we end up with the following:

\begin{align}
		\tilde{A}_\la \leq C |\la|^{\frac{cd}{3a} - \frac{d}{a}} + C |\la|^{\frac{d}{2}- \frac{d}{a} + \frac{2c}{3a} -1 }\,.
\end{align}

\bigskip

Let us recall that $c<3$ (see remark \ref{remc}) and therefore $q<1$. In turn, this makes sure that the conditions of lemma \ref{lemSeq} are fulfilled.  Now,  if $d=2$, both terms are equal. If $d \geq 3$, then the right term dominates and in both cases

\begin{align}
	\tilde{A}_\la \leq C |\la|^{\frac{d}{2}- \frac{d}{a} + \frac{2c}{3a} -1 }\, . \label{eq76} 
\end{align}
\bigskip

Finally, we note that by imposing $c > \frac32 a$,  {we fulfill the condition $2q>a$}. This completes the proof of lemma \ref{lemdomneg1}.

\bigskip

\subsection{Proof of lemma \ref{lemweylneg1}}\label{annex22}

We want to show that $	\tilde{N}(\la) \geq (2\pi)^{-d}w_d \tilde{W}_\la + O( \la^{ \frac{d}{2} - \frac{d}{a} + \frac{c}{3a}- \frac{1}{2}     })$.

\bigskip

As in the case with positive potentials, we will use a Dirichlet bracketing argument for the number of eigenvalues. Let $R_\Om$ be the Rayleigh quotient for the Dirichlet laplacian on $\Om$. Since the hypercubes $B_i$ do not overlap, we have the following lower bound:

	\begin{align} \label{eqweylneg}
		\tilde{N}(\la) \geq \sum_{i\geq 1} \tilde{N}_{i}(\la)\,,
	\end{align}
	where, similarly to section \ref{weylpos},
	\begin{align}
		\tilde{N}_i(\la):=\sup\left\{\dim(V) \,|\, V \subset H_0^1(B_i) \mbox{ such that } R_{B_i}(f) \leq  \inf\limits_{B_i}(\la  - V)  \mbox{ for all } f \in V, \, f \neq 0\right\}\,,
	\end{align}
$V$ being a vector space.

Using inequality \eqref{eqweylneg} and the formula for the Dirichlet spectrum of a hypercube with remainder, we obtain the following:

\begin{align}
		\tilde{N}(\la)		\geq \sum\limits_{i \geq 1} |B_i|(2\pi)^{-d}w_d \inf_{B_{i}}(\la-V)_+^{\frac{d}{2}} + O\left( \sum\limits_{i \geq 1} |\partial B_i| \inf\limits_{B_i} (\la-V)_+^{\frac{d}{2}-\frac{1}{2}}       \right) \, . \label{eq84}
\end{align}

We now compare the first term with $\tilde{W}_\la$:

\begin{align}
	\sum\limits_{i \geq 1} |B_i| \inf_{B_{i}}(\la-V)_+^{\frac{d}{2}} = \tilde{W_\la} + O \left( \sum_{i\geq 1} |B_i|\sup_{B_{i}}(\la-V)_+^{\frac{d}{2}} - \sum_{i\geq 1} |B_i|\inf_{B_{i}}(\la-V)_+^{\frac{d}{2}}     \right)  \\
	= \tilde{W_\la} + O\left(  \sum_{i \geq 1} |B_i| \sup_{B_{i}}|\nabla V| \sup_{B_{i}} (\la - V)_+^{\frac{d}{2}-1}   \right)\,.
\end{align}
Under the assumption $c>\frac32a$, choosing $q=\frac{c}{3}$ in the partition enables us to use the exact same steps as in section \ref{annex21} (the details are left to the reader):

\begin{align}
	\sum\limits_{i \geq 1} |B_i| \inf_{B_{i}}(\la-V)_+^{\frac{d}{2}} = \tilde{W}_\la + O(|\la|^{\frac{d}{2}- \frac{d}{a} + \frac{2c}{3a} -1 })\,.
\end{align}

We now bound the second term in (\ref{eq84}) by noting that due to the construction of the partition, for any exponents $q>0$ (for the size of the cubes) and $P<0$, $\sup_{B_{i}} |x|^P \leq C(q,P) \inf_{B_{i}} |x|^P$:

\begin{align}
	\sum\limits_{i \geq 1} |\partial B_i| \inf\limits_{B_i} (\la-V)_+^{\frac{d}{2}-\frac{1}{2}} &\leq C(d,V)\sum\limits_{i \geq 1} |B_i| \sup\limits_{B_i} |x|^{-\frac{c}{3}} \inf\limits_{B_i} (\la-V)_+^{\frac{d}{2}-\frac{1}{2}}\\
	 &\leq C(d,V)\sum\limits_{i \geq 1}|B_i| \inf\limits_{B_i} |x|^{-\frac{c}{3}} \inf\limits_{B_i} (C|x|^{-a})_+^{\frac{d}{2}-\frac{1}{2}}\\
 &\leq C(d,V)\int\limits_{B(0,C |\la|^{-1/a}) \backslash B_0} |x|^{-\frac{c}{3}}(C|x|^{-a})^{\frac{d}{2}-\frac{1}{2}}\\
	&\leq C(d,V) |\la|^{\frac{d}{2}- \frac{d}{a} + \frac{c}{3a} -\frac{1}{2} }\,.
\end{align}

Since $\frac{c}{3a} > \frac{1}{2}$, then ${\frac{d}{2}- \frac{d}{a} + \frac{c}{3a} -\frac{1}{2} }< {\frac{d}{2}- \frac{d}{a} + \frac{2c}{3a} -1 }$ and $$\tilde{N}(\la) \geq (2\pi)^{-d}w_d \tilde{W_\la} + O({|\la|}^{\frac{d}{2}- \frac{d}{a} + \frac{c}{3a} -\frac{1}{2} })\,.$$

\bigskip

This completes the proof of lemma \ref{lemweylneg1}.

\section{Proof of lemma \ref{lemSeq}}\label{annex3}

We recall lemma \ref{lemSeq}:

 For any $q<1$  and $r>0$, there exist a sequence of pairs of positive numbers $(r_i,d_i)$, $i\ge1$ and positive constants $Q,Q'$ such that
	\begin{enumerate}
		\item $r_1=r>0$, 
		\item $r_{i+1}=r_i+d_i$,
		\item $\frac{r_i}{d_i}\in \mathbb N$,
		\item $Qr_i^q\le d_i\le Q'r_i^q$,
		\item $r_i\to\infty$.
	\end{enumerate}

\begin{proof} Let us assume that we are given a sequence of positive integers $(n_i)$, $i\ge1$ and that we define $r_i$ and $d_i$ recursively by 
	\begin{align*}
		r_1&=r\,,\\
		d_i&=\frac{r_i}{n_i}\,,\\
		r_{i+1}&=r_i+d_i\,.
	\end{align*}
	Then, properties (1)--(3) are automatically satisfied. It remains to choose $(n_i)$ in such a way that (4)--(5) hold. Dividing by $r_i^q$, (4) can be rewritten
	\begin{equation}\label{ineq}
		Q\le \frac{r_{i}^{1-q}}{n_i}\le Q'\,.
	\end{equation}
	We can define $n_i$ by 
	\begin{equation*}
		n_i=\left\lceil r_i^{1-q}\right\rceil\,,
	\end{equation*}
	where $\lceil x \rceil$ is the smallest integer at least as large as $x$. Note that by combining this definition with the previous one, we construct the sequences $(r_i,d_i)$ and $(n_i)$ recursively, starting from $r_1=r$. The sequence $(r_i)$ is increasing, so it either is bounded or goes to $\infty$. If it was bounded, then we would have $n_i\le N$ for some integer $N$, and therefore 
	\begin{equation*}
		r_{i+1}\ge \left(1+\frac1N\right)r_i\,.
	\end{equation*}
	This implies $r_i\to\infty$, in contradiction with the assumption. Therefore $r_i\to\infty$, so (5) holds. Finally, we have
	\begin{equation*}
		\ r_i^{1-q}\le n_i<r_i^{1-q}+1\,.
	\end{equation*}
	We deduce
	\begin{equation*}
		\frac1{1+\ r^{q-1}}\le\frac1{1+\ r_i^{q-1}}\le\frac{r_{i}^{1-q}}{n_i}\le 1\,.
	\end{equation*}
	Since $d_i=r_i/n_i$, this shows that (4) holds with $Q=1/(1+r^{q-1})$ and $Q'=1$\,.
\end{proof}

\begin{remark}\label{rmrkQ} For any sequence $(r_i,d_i)$ satisfying the properties of lemma \ref{lemSeq}, there exists positive constants $Q'',Q'''$ such that 
	\begin{equation}\label{eqPart}
		Q''\le\frac{d_{i+1}}{d_i}\le Q'''\,.
	\end{equation}
\end{remark}
Indeed, dividing property (4) by $r_i $, we find 
\begin{equation*}
	Qr_i^{q-1}\le \frac{d_{i}}{r_i}\le Q'r_i^{q-1}\,,
\end{equation*} 
and property (5) then implies $d_i/r_i\to0$. We then get, from property (2),
\begin{equation*}
	\frac{r_{i+1}}{r_i}=1+\frac{d_i}{r_i}\to 1\,.
\end{equation*}
Since we have 
\begin{equation*}
	\frac{Q}{Q'}\left(\frac{r_{i+1}}{r_i}\right)^q\le\frac{d_{i+1}}{d_i}\le\frac{Q'}{Q}\left(\frac{r_{i+1}}{r_i}\right)^q
\end{equation*}
from property (4), the desired result follows.

\bigskip

\section{Declaration}

The authors have no conflicts of interest to declare that are relevant to the content of this article.

\bibliographystyle{alpha}


\end{document}